\numberwithin{equation}{section}
\newtheorem{thm}{Theorem}[subsection]
\newtheorem{cor}[thm]{Corollary}
\newtheorem{prop}[thm]{Proposition}
\newcommand\Aboxed[1]{  \@Aboxed#1\ENDDNE}
\def\@Aboxed#1&#2\ENDDNE{%
   &
   \settowidth\@tempdima{$\displaystyle#1{}$}
   \setlength\@tempdima{\@tempdima+\fboxsep+\fboxrule}
   \kern-\@tempdima
   \boxed{#1#2}
}
\newcommand{\thmref}[1]{Theorem~\ref{#1}}
\newcommand{\cororef}[1]{Corollary~\ref{#1}}
\newcommand{\eqnref}[1]{~(\ref{#1})}
\begin{document}

\title{DJKM algebras and associated ultraspherical polynomials}
\author{Ben Cox}
\address{Department of Mathematics \\
College of Charleston \\
66 George St.  \\
Charleston, SC 29424, USA}\ead{coxbl@cofc.edu}

\author{Vyacheslav Futorny}
\address{Department of Mathematics\\
 University of S\~ao Paulo\\
 S\~ao Paulo, Brazil}
 \ead{futorny@ime.usp.br}
 
\author{Juan A.Tirao}
\address{CIEM-FAMAF \\Universidad Nacional de Cordoba\\
5000 Cordoba \\Argentina}\ead{tirao@famaf.unc.edu.ar}
 \begin{abstract} We describe families of  polynomials arising in the study of the universal central extensions of Lie algebras introduced by Date, Jimbo, Kashiwara, and Miwa \cite{MR701334} in their work on the Landau-Lifshitz equations.  We show these two families of polynomials satisfy certain fourth order linear differential equations by direct computation and one of the families is a particular collection of associated ultraspherical polynomials.
 \end{abstract}


\maketitle

\section{Introduction}
Date, Jimbo, Kashiwara and Miwa \cite{MR701334} studied
integrable systems arising from Landau-Lifshitz differential equation. The hierarchy of this equation is written in terms of free fermions on an elliptic curve. The authors introduced a certain
infinite-dimensional Lie algebra which is a one dimensional central extension of $\mathfrak g\otimes \mathbb C[t,t^{-1},u|u^2=(t^2-b^2)(t^2-c^2)]$ where $b\neq \pm c$ are complex
 constants and $\mathfrak g$ is a simple finite dimensional Lie algebra. This Lie algebra which we call the DJKM algebra, acts on the solutions of the Landau-Lifshitz equation as infinitesimal B\"acklund transformations.

The Lie algebra above is an example of a Krichever-Novikov  algebra (see (\cite{MR902293}, \cite{MR925072}, \cite{MR998426}).  A fair amount of interesting and
fundamental work has be done by Krichever, Novikov, Schlichenmaier, and Sheinman on the representation theory of these algebras.
 In particular Wess-Zumino-Witten-Novikov theory and analogues of the Knizhnik-Zamolodchikov equations are developed for  these algebras
(see the survey article \cite{MR2152962}, and for example \cite{MR1706819}, \cite{MR1706819},\cite{MR2072650},\cite{MR2058804},\cite{MR1989644}, and \cite{MR1666274}).

In \cite{MR2813377} the authors gave commutations relations of
the universal central extension of the DJKM Lie algebra in terms of a basis of the algebra and certain polynomials.  More precisely in order to pin down this central extension, we needed to describe four \color{black} families of polynomials that appeared as coefficients in the commutator formulae.   In this previous work we gave recursion relations for these polynomials and then found generating functions for them.
Two of these families of polynomials are given in terms of
elliptic integrals and the other two families are slight variations of ultraspherical polynomials.
The main purpose of this note is to describe fourth order linear differential equations satisfied by the these two elliptic families of polynomials (see \eqnref{4thorder1} and \eqnref{4thorder2}), and to explain why these polynomials are orthogonal and nonclassical (see \cororef{cor-orthog-non-clas}, \thmref{thm-orth}, and \thmref{nonclassical}).   In fact one of the families are particular examples of associated ultraspherical polynomials (see \cite{MR663314}).   The associated ultraspherical polynomials in turn are, up to factors of ascending factorials, particular associated Jacobi polynomials.   The associated Jacobi polynomials are known to satisfy certain fourth order linear differential equations (see \cite{MR2191786} and formula (48) in \cite{MR915027}).  The proof given in the previous cited paper is obtained using MACSYMA and ideas coming from the classic text \cite{MR1349110}.    We show how we obtained a proof in the ultraspherical case that is done by hand and omit the calculation done for the remaining family of polynomials as it is similar. The referee was instrumental in identifying one of the families of nonclassical polynomials describing the universal central extension and the references about associated ultraspherical and Jacobi polynomials.  We would like to thank the referee for directing us to the relevant articles and books. 

For other examples of families of orthogonal polynomials that satisfy fourth order linear differential equations see the work of Samuel Shore, Allan Krall, and H. L. Krall (\cite{MR668944}, \cite{MR606336}, \cite{MR0002679}).  The authors would like to thank Lance Littlejohn for these references and helpful correspondence.    Still for even more examples of families of orthogonal polynomials that satisfy fourth order linear  differential equations appearing in the study of supersingular $j$-invariants see the work of Kaneko and Zagier (\cite{MR1486833}).

We plan to use these families of polynomials to describe free field realizations of the DKJM algebra in the setting of conformal field theory.  In the case of affine Kac-Moody algebras, initial motivation for the use of Wakimoto's realization (or free field realization) was to prove a conjecture of V. Kac and D. Kazhdan on the character of certain irreducible representations of affine Kac-Moody algebras at the critical level (see \cite{W} and \cite{MR2146349}). Another motivation for constructing free field realizations is that they are used to provide integral solutions to the KZ-equations (see for example \cite{MR1077959} and \cite{MR1629472} and their references).  A third is that they are used to help in determining the center of a certain completion of the enveloping algebra of an affine Lie algebra at the critical level which is an important ingredient in the geometric Langland's correspondence \cite{MR2332156}.  Yet a fourth is that free field realizations of an affine Lie algebra appear naturally in the context of the generalized AKNS hierarchies \cite{MR1729358}.

\section{DJKM algebras}

 Let $R$ be a commutative algebra defined over $\mathbb C$.
Consider the left $R$-module  with  action $f( g\otimes h ) = f g\otimes h$ for $f,g,h\in R$ and let $K$  be the submodule generated by the elements $1\otimes fg  -f \otimes g -g\otimes f$.
Then $\Omega_R^1=F/K$ is the module of K\"ahler differentials.  The element $f\otimes g+K$ is traditionally denoted by $fdg$.  The canonical map $d:R\to \Omega_R^1$ by $df  = 1\otimes f  + K$.
The {\it exact differentials} are the elements of the subspace $dR$.  The coset  of $fdg$  modulo $dR$ is denoted by $\overline{fdg}$.
As C. Kassel showed the universal central extension of the current algebra $\mathfrak g\otimes R$ where $\mathfrak g$ is a simple finite dimensional
Lie algebra defined over $\mathbb C$, is the vector space $\hat{\mathfrak g}=(\mathfrak g\otimes R)\oplus \Omega_R^1/dR$ with Lie bracket given by
$$
[x\otimes f,Y\otimes g]=[xy]\otimes fg+(x,y)\overline{fdg},  [x\otimes f,\omega]=0,  [\omega,\omega']=0,
$$
  where $x,y\in\mathfrak g$, and $\omega,\omega'\in \Omega_R^1/dR$ and $(x,y)$  denotes the Killing  form  on $\mathfrak g$.

Consider the polynomial
$$
p(t)=t^n+a_{n-1}t^{n-1}+\cdots+a_0
$$
where $a_i\in\mathbb C$ and $a_n=1$.
Fundamental to the description of the universal central extension for $R=\mathbb C[t,t^{-1},u|u^2=p(t)]$ are the following two results:
\begin{thm}[\cite{MR1303073},Theorem 3.4]  Let $R$ be as above.  The set
$$
\{\overline{t^{-1}\,dt},\overline{t^{-1}u\,dt},\dots, \overline{t^{-n}u\,dt}\}
$$
 forms a basis of $\Omega_R^1/dR$ (omitting $\overline{t^{-n}u\ dt}$ if $a_0=0$).
\end{thm}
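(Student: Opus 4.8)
The plan is to compute $\Omega_R^1/dR$ explicitly, first describing $\Omega_R^1$ as a module over $A:=\mathbb{C}[t,t^{-1}]$ and then reducing an arbitrary class modulo exact differentials. Since $R=A\oplus Au$ is free of rank two over $A$ and $u^2=p(t)$, differentiating gives the single relation $2u\,du=p'(t)\,dt$ in $\Omega_R^1$; thus $\Omega_R^1$ is spanned over $A$ by $dt,\ u\,dt,\ du,\ u\,du$, and eliminating $u\,du=\tfrac12 p'\,dt$ presents it as the quotient of $A\,dt\oplus A\,u\,dt\oplus A\,du$ by the relation $2p\,du=p'u\,dt$. A general exact form is $d(a+bu)=a'\,dt+b'u\,dt+b\,du$ with $a,b\in A$. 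I would first use $\overline{h\,du}=-\overline{h'u\,dt}$ (which is nothing but $d(hu)\in dR$) to remove every $du$, and then use $t^k\,dt=d\big(t^{k+1}/(k+1)\big)$ for $k\neq-1$ to collapse the pure $dt$-part to a multiple of $\overline{t^{-1}\,dt}$. This shows that $\Omega_R^1/dR$ is spanned by $\overline{t^{-1}\,dt}$ together with the classes $\overline{t^k u\,dt}$, $k\in\mathbb{Z}$; moreover, matching $dt$-components in any linear relation forces the coefficient of $\overline{t^{-1}\,dt}$ to vanish and shows $\overline{t^{-1}\,dt}\neq0$, since $t^{-1}$ is not the derivative of a Laurent polynomial, so this class is nonzero and decoupled from the $u\,dt$-forms.

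The crucial input is the identity $u^3=p(t)u\in R$. Computing $d(t^m u^3)$ in two ways (directly, and through $u^3=pu$ together with $2p\,du=p'u\,dt$) gives
\begin{equation*}
d(t^m u^3)=\tfrac12\big(2m\,t^{m-1}p(t)+3\,t^m p'(t)\big)u\,dt,
\end{equation*}
so $\overline{L(t^m)u\,dt}=0$, where $L(\lambda):=2p\lambda'+3p'\lambda$; the same component bookkeeping shows $\overline{g\,u\,dt}=0$ \emph{exactly} when $g\in\operatorname{im}L$. Expanding, $L(t^m)=\sum_{j=0}^n(2m+3j)a_j\,t^{m+j-1}$, with top coefficient $2m+3n$ and bottom coefficient $2m\,a_0$. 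Because $2m+3n\neq0$ for $m\ge0$, the leading term lets me rewrite any $\overline{t^k u\,dt}$ with $k\ge0$ in terms of strictly lower powers (reducing $t^0$ lands precisely in $t^{-n},\dots,t^{-1}$), and because $2m\,a_0\neq0$ for $m\neq0$ when $a_0\neq0$, the trailing term rewrites any $\overline{t^k u\,dt}$ with $k\le-n-1$ in terms of strictly higher powers (reducing $t^{-n-1}$ again lands in $t^{-n},\dots,t^{-1}$). Iterating both reductions funnels every $\overline{t^k u\,dt}$ into the window $\{-1,\dots,-n\}$, establishing that the proposed set spans.

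For linear independence I must show $\operatorname{im}L\cap\operatorname{span}_{\mathbb{C}}\{t^{-1},\dots,t^{-n}\}=0$, i.e.\ that $L(\lambda)=\sum_{i=1}^n c_it^{-i}$ forces $\lambda=0$; this is the step I expect to be the main obstacle, and I would settle it by a simultaneous degree/order count at $t=\infty$ and $t=0$. If $\lambda\neq0$ has top degree $D$ and bottom degree $d$, then $L(\lambda)$ has top degree $n-1+D$ with coefficient $(2D+3n)\lambda_D$ and bottom degree $d-1$ with coefficient $2d\,a_0\lambda_d$. Requiring $\operatorname{supp}L(\lambda)\subseteq[-n,-1]$ forces $(2D+3n)\lambda_D=0$ whenever $D\ge-n+1$; since $\lambda_D\neq0$ this needs $2D+3n=0$, whose only root $D=-3n/2$ violates $D\ge-n+1$, so in fact $D\le-n$. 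But then $d\le D\le-n$ makes the nonzero bottom coefficient $2d\,a_0\lambda_d$ appear in degree $d-1\le-n-1<-n$, outside $[-n,-1]$ --- a contradiction. Hence $\lambda=0$ and every $c_i=0$, which together with the separateness of $\overline{t^{-1}\,dt}$ yields the asserted basis. Finally, when $a_0=0$ the trailing behaviour of $L$ is instead governed by the lowest nonzero coefficient of $p$, which shifts the funnelling window by one and deletes $\overline{t^{-n}u\,dt}$ exactly as claimed, the identical degree argument applying verbatim.
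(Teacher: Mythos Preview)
The paper does not give its own proof of this statement; it is quoted verbatim from Bremner \cite{MR1303073}, Theorem~3.4, so there is nothing in the paper to compare your argument against.

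That said, your argument is essentially correct and self-contained. Your identification of the ``odd'' part of $\Omega_R^1/dR$ with $A/\operatorname{im}L$, where $L(\lambda)=2p\lambda'+3p'\lambda$, is exactly right: writing $gu\,dt\equiv 0$ forces $g=-(2pc'+3p'c)$ for some $c\in A$, as one sees by matching the $dt$, $u\,dt$ and $du$ components in the presentation $(A\,dt\oplus Au\,dt\oplus A\,du)/A(2p\,du-p'u\,dt)$. The resulting relation $\overline{L(t^m)u\,dt}=0$ is precisely the recursion recorded immediately afterward in the paper as Proposition~2.0.2 (their Lemma~2.0.2 from \cite{MR2813377}) specialised to $m=2$, so your spanning argument reproduces that result along the way. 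Your top/bottom degree count for linear independence is clean and correct in the main case $a_0\neq0$, which is the only case relevant to the DJKM setting of the paper. The final sentence on $a_0=0$ is a sketch rather than a proof: when $a_0=0$ the bottom term of $L(t^m)$ sits at degree~$m$ with coefficient $(2m+3)a_1$ (assuming $a_1\neq0$), and one should also note that the statement as phrased says nothing about the case $a_0=a_1=0$. If you want the $a_0=0$ claim to stand on its own you should spell out that shifted window and the corresponding nonvanishing of $(2d+3)a_1$ explicitly.
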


\begin{prop}[\cite{MR2813377}, Lemma 2.0.2.]  If $u^m=p(t)$ and $R=\mathbb C[t,t^{-1},u|u^m=p(t)]$, then in $\Omega_R^1/dR$, one has
\begin{equation}\label{recursionreln}
((m+1)n+im)t^{n+i-1}u\,dt \equiv - \sum_{j=0}^{n-1}((m+1)j+mi)a_jt^{i+j-1}u\,dt\mod dR
\end{equation}
\end{prop}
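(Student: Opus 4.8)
The plan is to obtain the entire congruence from a single exact differential: everything should fall out of expanding $d(t^i u^{m+1})$ and reducing modulo $dR$. The reason for the exponent $m+1$ on $u$ is that it is the smallest power which, after differentiation and use of the defining relation, leaves only one surviving factor of $u$ rather than a negative power of $u$; this is exactly what keeps the computation inside $R$ and lets the result be read off as a relation among the $t^{i+j-1}u\,dt$.

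First I would record the two consequences of $u^m = p(t)$ that I need. Applying $d$ gives $m\,u^{m-1}\,du = p'(t)\,dt$ in $\Omega_R^1$; multiplying this identity by $u$ and using $u^m = p(t)$ converts it into the denominator-free form $m\,p(t)\,du = u\,p'(t)\,dt$. I regard clearing this denominator as the one genuinely clever step. Naively solving for $du$ introduces $1/u^{m-1} = u/p(t)$, which does not lie in $R$, whereas multiplication by $u$ absorbs the obstruction and expresses $p(t)\,du$ as a genuine polynomial multiple of $u\,dt$. This is where I expect the only real difficulty to be.

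Next I would apply the product rule, writing $d(t^i u^{m+1}) = i\,t^{i-1}u^{m+1}\,dt + (m+1)t^i u^m\,du$, then rewrite $u^{m+1} = p(t)u$ and $u^m = p(t)$ and substitute $p(t)\,du = \tfrac1m u\,p'(t)\,dt$ from the previous step. After multiplying through by $m$, and using that an exact differential vanishes in $\Omega_R^1/dR$ by definition, this yields
\begin{equation*}
0 \equiv \bigl(m i\,t^{i-1}p(t) + (m+1)t^i p'(t)\bigr)u\,dt \pmod{dR}.
\end{equation*}

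Finally I would expand $t^{i-1}p(t) = \sum_{j=0}^n a_j t^{i+j-1}$ and $t^i p'(t) = \sum_{j=0}^n j a_j t^{i+j-1}$, so the bracket collapses to $\sum_{j=0}^n \bigl((m+1)j + mi\bigr)a_j\,t^{i+j-1}$. Isolating the top term $j=n$ (where $a_n=1$) and transposing the remaining sum produces precisely \eqnref{recursionreln}, the agreement of the leading coefficient $mi+(m+1)n$ with the stated $(m+1)n+im$ being the final bookkeeping check. Apart from the denominator-clearing trick, no step requires an idea beyond careful tracking of coefficients.
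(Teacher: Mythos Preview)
Your proof is correct. The paper itself does not give a proof of this proposition: it is quoted verbatim as Lemma~2.0.2 from \cite{MR2813377} and used as input for the subsequent recursion computations, so there is nothing in the present paper to compare against.

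For the record, the argument you give is exactly the natural one and is the one used in the cited source: differentiate $t^i u^{m+1}$, use $u^{m+1}=p(t)u$ and the relation $m u^{m-1}\,du=p'(t)\,dt$ (after multiplying by $u$ to obtain $u^m\,du=\tfrac1m\,u\,p'(t)\,dt$), and read off the coefficient identity. Your emphasis on the ``denominator-clearing trick'' is well placed, though one could phrase it slightly differently: rather than first writing $m u^{m-1}\,du=p'(t)\,dt$ and then multiplying by $u$, one can observe directly that $(m+1)u^m\,du=d(u^{m+1})=d(p(t)u)=p'(t)u\,dt+p(t)\,du$, which combined with $u^m=p(t)$ gives $m\,p(t)\,du=u\,p'(t)\,dt$ without ever passing through a form that looks like it might involve $u^{-1}$. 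Either route is fine; they are the same computation.
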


In the Date-Jimbo-Miwa-Kashiwara  setting one takes $m=2$ and $p(t)=(t^2-a^2)(t^2-b^2)=t^4-(a^2+b^2)t^2+(ab)^2$ with $a\neq \pm b$ and neither $a$ nor $b$ is zero.
We fix from here onward $R=\mathbb C[t,t^{-1},u\,|\,u^2= (t^2-a^2)(t^2-b^2)]$.  As in this case $a_0=(ab)^2$, $a_1=0$, $a_2=-(a^2+b^2)$, $a_3=0$ and $a_4=1$, then
 letting $k=i+3$ the recursion relation in \eqnref{recursionreln} looks like

\begin{align*}
(6+2k)\overline{t^{k}u\,dt}
&=-2(k-3)(ab)^2\overline{t^{k-4}u\,dt} +2k(a^2+b^2)\overline{t^{k-2}u\,dt}.
\end{align*}
After a change of variables, $u\mapsto u/ab$, $t\mapsto t/\sqrt{ab}$, we may assume that $a^2b^2=1$.   Then the recursion relation looks like
\begin{equation}\label{recursionreln1}
(6+2k)\overline{t^{k}u\,dt}
=-2(k-3)\overline{t^{k-4}u\,dt} +4kc\overline{t^{k-2}u\,dt},
\end{equation}
after setting $c=(a^2+b^2)/2$, so that $p(t)=t^4-2ct^2+1$.  Let $P_k:=P_k(c)$ be the polynomial in $c$ satisfy the recursion relation
$$
(6+2k)P_k(c)
=4k cP_{k-2}(c)-2(k-3)P_{k-4}(c)
$$
for $k\geq 0$.
Then set
$$
P(c,z):=\sum_{k\geq -4}P_k(c)z^{k+4}=\sum_{k\geq 0}P_{k-4}(c)z^{k}.
$$
so that after some straightforward rearrangement of terms we have

\begin{align*}
0&=\sum_{k\geq 0}(6+2k)P_k(c)z^k
-4c\sum_{k\geq 0}kP_{k-2}(c)z^{k} +2\sum_{k\geq 0}(k-3)P_{k-4}(c)z^{k}  \\
&=(-2z^{-4} +8cz^{-2}-6)P(c,z) +(2z^{-3}-4cz^{-1}+2z)\frac{d}{dz}P(c,z)  \\
&\quad+(2z^{-4}-8cz^{-2})P_{-4}(c)  -4cP_{-3}(c)  z^{-1} -2P_{-2}(c)z^{-2} -4P_{-1}(c)z^{-1}.
\end{align*}

Hence $P(c,z)$ must satisfy the differential equation
\begin{equation}\label{funde}
\frac{d}{dz}P(c,z)-\frac{3z^4-4c z^2+1}{z^5-2cz^3+z}P(c,z)=\frac{2\left(P_{-1}+cP_{-3} \right)z^3 +P_{-2} z^2+(4cz^2-1)P_{-4} }{z^5-2cz^3+z}
\end{equation}
This has integrating factor
\begin{align*}
\mu(z)&
=\exp \int\left( \frac{-2 \left(z^3-cz\right)}{1-2 c z^2+z^4 }-\frac{1}{z}\right)\,dz  \\
&=\exp(-\frac{1}{2} \ln(1-2 c z^2+z^4)-\ln (z))=\frac{1}{z\sqrt{1-2 c z^2+z^4}}.
\end{align*}

\subsection{Elliptic Case 1}
If we take initial conditions $P_{-3}(c)=P_{-2}(c)=P_{-1}(c)=0$ and $P_{-4}(c)=1$ then we arrive at a generating function
$$
P_{-4}(c,z):=\sum_{k\geq -4}P_{-4,k}(c)z^{k+4}=\sum_{k\geq 0}P_{-4,k-4}(c)z^{k},
$$
defined in terms of an elliptic integral
\begin{align*}
P_{-4}(c,z)&=z\sqrt{1-2 c z^2+z^4}\int \frac{4cz^2-1}{z^2(z^4-2c z^2+1)^{3/2}}\, dz.
\end{align*}

\subsection{Elliptic Case 2}
If we take initial conditions $P_{-4}(c)=P_{-3}(c)=P_{-1}(c)=0$ and $P_{-2}(c)=1$, we arrive at a generating function defined in terms of another elliptic integral:
\begin{align*}
P_{-2}(c,z)&=z\sqrt{1-2 c z^2+z^4}\int \frac{1}{ (z^4-2c z^2+1)^{3/2}}\, dz.
\end{align*}

\subsection{Gegenbauer  Case 3}
If we take $P_{-1}(c)=1$, and $P_{-2}(c)=P_{-3}(c)=P_{-4}(c)=0$ and set $\displaystyle{P_{-1}(c,z)=\sum_{n\geq 0}P_{-1,n-4}z^n}$, then
$$
P_{-1}(c,z)=\frac{1}{c^2-1}\left(cz-z^3-cz+c^2z^3-\sum_{k=2}^\infty c Q_n^{(-1/2)}(c)z^{2n+1}\right),
$$
where $Q^{(-1/2)}_n(c)$ is the $n$-th Gegenbauer polynomial.   Hence
\begin{align*}
P_{-1,-4}(c)&=P_{-1,-3}(c)=P_{-1,-2}(c) =P_{-1,2m}(c)=0, \\
P_{-1,-1}(c)&=1,  \\
P_{-1,2n-3}(c)&=\frac{-cQ_{n}(c)}{c^2-1},
\end{align*}
for $m\geq 0$ and $n\geq 2$ .
The $Q^{(-1/2)}_n(c)$ are known to satisfy the second order differential equation:
\begin{align*}
(1-c^2)\frac{d^2}{d^2 c}Q^{(-1/2)}_n(c)+n(n-1)Q^{(-1/2)}_{n}(c)=0
\end{align*}
so that the $P_{-1,k}:=P_{-1,k}(c)$ satisfy the second order differential equation
\begin{align*}
(c^4-c^2)\frac{d^2}{d^2 c}P_{-1,2n-3}+2c(c^2+1)\frac{d}{d  c}P_{-1,2n-3}+(-c^2n(n-1)-2)P_{-1,2n-3}=0
\end{align*}
for $n\geq 2$.

\subsection{Gegenbauer Case 4}
Next we consider the initial conditions $P_{-1}(c)=0=P_{-2}(c)=P_{-4}(c)=0$ with $P_{-3}(c)=1$ and set
$$
P_{-3}(c,z)=\sum_{n\geq 0}P_{-3,n-4}(c)z^n=\frac{1}{c^2-1}\left(c^2z-cz^3-z+cz^3-\sum_{k=2}^\infty Q_n^{(-1/2)}(c)z^{2n+1}\right),$$
where $Q^{(-1/2)}_n(c)$ is the $n$-th Gegenbauer polynomial.  Hence
\begin{align*}
P_{-3,-4}(c)&=P_{-3,-2}(c)=P_{-3,-1}(c) =P_{-1,2m}(c)=0, \\
P_{-3,-3}(c)&=1,  \\
P_{-3,2n-3}(c)&=\frac{-Q_{n}(c)}{c^2-1},
\end{align*}
for $m\geq 0$ and $n\geq 2$ and hence \begin{align*}
(c^2-1)\frac{d^2}{d^2 c}P_{-3,2n-3}+4c \frac{d}{d c}P_{-3,2n-3} -(n+1)(n-2)P_{-3,2n-3} =0
\end{align*}
for $n\geq 2$ and $P_{-1,2n-3}=cP_{-3,2n-3}$ for $n\geq 2$.

We'll see in the last section how the polynomials described in the elliptic cases 1 and 2 are particular examples of associated ultraspherical polynomials.

The importance of these families of polynomials come from our previous work describing the universal central extension of the DJKM algebra:
\begin{thm}[\cite{MR2813377}] Let $\mathfrak g$ be a simple finite dimensional Lie algebra over the complex numbers with   the Killing form $(\,|\,)$ and define $\psi_{ij}(c)\in\Omega_R^1/dR$ by
\begin{equation}
\psi_{ij}(c)=\begin{cases}
\omega_{i+j-2}&\quad \text{ for }\quad i+j=1,0,-1,-2 \\
P_{-3,i+j-2}(c) (\omega_{-3}+c\omega_{-1})&\quad \text{for} \quad i+j =2n-1\geq 3,\enspace n\in\mathbb Z, \\
P_{-3,i+j-2}(c) (c\omega_{-3}+\omega_{-1})&\quad \text{for} \quad i+j =-2n+1\leq - 3, n\in\mathbb Z, \\
P_{-4,|i+j|-2}(c) \omega_{-4} +P_{-2,|i+j|-2}(c)\omega_{-2}&\quad\text{for}\quad |i+j| =2n \geq 2, n\in\mathbb Z. \\
\end{cases}
\end{equation}
The universal central extension of the Date-Jimbo-Kashiwara-Miwa  algebra is the $\mathbb Z_2$-graded Lie algebra
$$
\widehat{\mathfrak g}=\widehat{\mathfrak g}^0\oplus \widehat{\mathfrak g}^1,
$$
where
$$
\widehat{\mathfrak g}^0=\left(\mathfrak g\otimes \mathbb C[t,t^{-1}]\right)\oplus \mathbb C\omega_{0},\qquad \widehat{\mathfrak g}^1=\left(\mathfrak g\otimes \mathbb C[t,t^{-1}]u\right)\oplus \mathbb C\omega_{-4}\oplus \mathbb C\omega_{-3}\oplus \mathbb C\omega_{-2}\oplus \mathbb C\omega_{-1}
$$
with bracket
\begin{align*}
[x\otimes t^i,y\otimes t^j]&=[x,y]\otimes t^{i+j}+\delta_{i+j,0}j(x,y)\omega_0, \\ \\
[x\otimes t^{i-1}u,y\otimes t^{j-1}u]&=[x,y]\otimes (t^{i+j+2}-2ct^{i+j}+t^{i+j-2}) \\
 &\hskip 40pt+\left(\delta_{i+j,-2}(j+1) -2cj\delta_{i+j,0} +(j-1)\delta_{i+j,2}\right)(x,y)\omega_0, \\ \\
[x\otimes t^{i-1}u,y\otimes t^{j}]&=[x,y]u\otimes t^{i+j-1}+ j(x,y)\psi_{ij}(c).
\end{align*}

\end{thm}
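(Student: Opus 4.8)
The plan is to deduce the whole structure from C. Kassel's description of the universal central extension recalled above, namely $\widehat{\mathfrak g}=(\mathfrak g\otimes R)\oplus\Omega_R^1/dR$ with bracket $[x\otimes f,y\otimes g]=[x,y]\otimes fg+(x,y)\overline{f\,dg}$. First I would fix the basis of $\Omega_R^1/dR$ furnished by the first cited theorem (\cite{MR1303073}, Theorem 3.4): writing $\omega_0=\overline{t^{-1}\,dt}$ and $\omega_{-s}=\overline{t^{-s}u\,dt}$ for $s=1,2,3,4$, these five classes span $\Omega_R^1/dR$ because here $a_0=(ab)^2\neq0$. The stated $\mathbb Z_2$-grading is then forced by the decomposition $R=\mathbb C[t,t^{-1}]\oplus\mathbb C[t,t^{-1}]u$: assigning degree $0$ to $\mathbb C[t,t^{-1}]$ and to $\omega_0$, and degree $1$ to $\mathbb C[t,t^{-1}]u$ and to $\omega_{-1},\dots,\omega_{-4}$, makes the bracket homogeneous, giving $\widehat{\mathfrak g}=\widehat{\mathfrak g}^0\oplus\widehat{\mathfrak g}^1$ with the indicated components.

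Next I would compute the three homogeneous pieces of the bracket directly from the cocycle $(x,y)\overline{f\,dg}$, reducing modulo $dR$ and using that $t^m\,dt$ is exact unless $m=-1$. For $f,g\in\mathbb C[t,t^{-1}]$ one has $(x,y)\,\overline{t^i\,d(t^j)}=j(x,y)\,\overline{t^{i+j-1}\,dt}$, which contributes $\delta_{i+j,0}\,j(x,y)\omega_0$ and yields the first formula. For $x\otimes t^{i-1}u$ and $y\otimes t^{j-1}u$ the $\mathfrak g$-part uses $u^2=p(t)=t^4-2ct^2+1$ to give $[x,y]\otimes(t^{i+j+2}-2ct^{i+j}+t^{i+j-2})$, while for the cocycle I would use the relation $2u\,du=p'(t)\,dt$ in $\Omega_R^1$ to obtain $t^{i-1}u\,d(t^{j-1}u)=\bigl((j+1)t^{i+j+1}-2cj\,t^{i+j-1}+(j-1)t^{i+j-3}\bigr)dt$, whose $\overline{t^{-1}\,dt}$-component is exactly the coefficient $\delta_{i+j,-2}(j+1)-2cj\,\delta_{i+j,0}+(j-1)\delta_{i+j,2}$ of $(x,y)\omega_0$. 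Finally, the mixed bracket $[x\otimes t^{i-1}u,y\otimes t^j]$ has $\mathfrak g$-part $[x,y]u\otimes t^{i+j-1}$ and cocycle $j(x,y)\overline{t^{i+j-2}u\,dt}$, so the entire content of $\psi_{ij}(c)$ is the single class $\overline{t^{\,m}u\,dt}$ with $m=i+j-2$.

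The heart of the proof is therefore to expand $\overline{t^{\,m}u\,dt}$ in the basis $\{\omega_{-1},\omega_{-2},\omega_{-3},\omega_{-4}\}$. I would observe that the recursion \eqnref{recursionreln1}, the normalized form of the Proposition from \cite{MR2813377}, relates the indices $k,k-2,k-4$ and hence preserves the parity of the exponent: even powers reduce to combinations of $\omega_{-2},\omega_{-4}$ and odd powers to combinations of $\omega_{-1},\omega_{-3}$. Running the recursion downward for $m\geq0$, and solving it for its lowest term to run it upward for $m\leq-5$, shows that the coefficient of $\omega_{-s}$ in $\overline{t^{\,m}u\,dt}$ is precisely the fundamental solution of the recursion determined by $P_{-s}=1$ and $P_{-s'}=0$ for $s'\neq s$; these four fundamental solutions are exactly the families $P_{-1,m},P_{-2,m},P_{-3,m},P_{-4,m}$ of the four subsections above. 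Thus $\overline{t^{\,m}u\,dt}=\sum_{s=1}^4 P_{-s,m}(c)\,\omega_{-s}$, and the case list follows by separating parities and base cases: for $m\in\{-1,-2,-3,-4\}$ (that is, $i+j\in\{1,0,-1,-2\}$) one gets $\omega_m$; for even $m$ only $P_{-2,m},P_{-4,m}$ survive; and for odd $m$ the relation $P_{-1,2n-3}=cP_{-3,2n-3}$ from Gegenbauer Case 4 collapses the odd part into $P_{-3,m}(\omega_{-3}+c\omega_{-1})$ for $m\geq1$, with the negative-exponent analogue producing $P_{-3,m}(c\omega_{-3}+\omega_{-1})$ for $m\leq-5$.

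I expect the bookkeeping of this last step to be the main obstacle. One must check that the downward and upward recursions genuinely terminate on the five chosen classes without spurious boundary contributions, track the seed data so that the four fundamental solutions are matched to the correct $\omega_{-s}$, and establish the two collapse identities that produce the compact case list: the relation $P_{-1,2n-3}=cP_{-3,2n-3}$ tying the odd families together, and the reflection symmetry $k\mapsto -k-4$ of the recursion which identifies the even negative-exponent expansion with the family evaluated at $|i+j|-2$ and, on the odd side, exchanges the roles of $\omega_{-1}$ and $\omega_{-3}$ so that $(\omega_{-3}+c\omega_{-1})$ becomes $(c\omega_{-3}+\omega_{-1})$. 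Everything else reduces to routine manipulation of Kähler differentials modulo $dR$.
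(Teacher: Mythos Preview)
The paper does not contain a proof of this theorem: it is stated with the attribution \cite{MR2813377} and simply quoted as background from that earlier article, so there is no ``paper's own proof'' to compare against. Your sketch is therefore being measured against a result the present paper takes for granted.

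That said, your outline is sound and is essentially the natural route (and presumably the one taken in \cite{MR2813377}). The reduction to Kassel's cocycle is correct, your computation of the three bracket components checks out---in particular your handling of $t^{i-1}u\,d(t^{j-1}u)$ via $2u\,du=p'(t)\,dt$ is exactly right---and the identification $\psi_{ij}(c)=\overline{t^{\,i+j-2}u\,dt}$ is the crux. Your parity argument from the three-term recursion \eqnref{recursionreln1} is valid, and your observation that the substitution $k\mapsto -k-4$ leaves the recursion invariant (hence swaps the roles of $\omega_{-1}$ and $\omega_{-3}$ while fixing $\omega_{-2}$) is the clean way to obtain the negative-exponent cases. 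The only genuine work you have correctly flagged is the boundary bookkeeping: one must verify that running the recursion in either direction never requires division by zero at the seed indices, and that the collapse $P_{-1,2n-3}=cP_{-3,2n-3}$ holds for all relevant $n$, not just $n\ge2$. None of this is deep, but it is where errors would creep in.
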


\section{Differential equations for Elliptic type 1 and 2}

\subsection{Elliptic type 1}
How we arrive at the fourth order linear differential equation that the polynomials $P_{-4,n}$ satisfy stems from the approach used in Afken's book \cite{MR1810939} for finding the second order linear differential equation that Legendre polynomials satisfy using only the recursion relation they satisfy.  His technique seems to be indirectly based on ideas used in the theory of Gr\"obner basis.    Unfortunately the calculations and relations involved are rather tedious.

From now on we are going to reindex the polynomials $P_{-4,n}$:
\begin{align*}
P_{-4}(c,z)&=z\sqrt{1-2 c z^2+z^4}\int \frac{4cz^2-1}{z^2(z^4-2c z^2+1)^{3/2}}\, dz=\sum_{n=0}^\infty P_{-4,n}(c)z^n \\
&=1+z^4+\frac{4c}{5}z^6 +\frac{1}{35} \left(32 c^2-5\right) z^8+\frac{16}{105} c \left(8 c^2-3\right) z^{10} \\
 &\quad -\frac{\left(2048 c^4-1248 c^2+75\right) }{1155}z^{12}+O(z^{14})
\end{align*}
This means that now $P_{-4,0}(c)=1$, $P_{-4,1}(c)=P_{-4,2}(c)=
P_{-4.3}(c)=0$.
Besides $P_{-4,0}(c)$,  the first few nonzero polynomials in $c$ are
$$
P_{-4,4}(c)=1,\quad P_{-4,6}=\frac{4c}{5}, \quad P_{-4,8}=
\frac{32c^2-5}{35}
$$
$$
 P_{-4,10}=\frac{16}{105} c \left(8 c^2-3\right),\quad P_{-4,12}=-\frac{\left(2048 c^4-1248 c^2+75\right) }{1155}
$$
and
$P_{-4,n}(c)$ satisfy the following recursion:
\begin{equation}
(6+2k)P_{k+4}(c)
=4k cP_{k+2}(c)-2(k-3)P_{k}(c).
\end{equation}

Our goal in this section is to find families of linear differential equation in $c$ that these polynomials satisfy.
We start off with the generating function
\begin{align*}
P_{-4}(c,z)&=z\sqrt{1-2 c z^2+z^4}\int \frac{4cz^2-1}{z^2(z^4-2c z^2+1)^{3/2}}\, dz \\
&=z\sqrt{1-2 c z^2+z^4}\left(  \sum_{n=0}^\infty  \frac{4cQ_n^{(3/2)}(c)}{2n+1} z^{2n+1}- \sum_{n=0}^\infty  \frac{Q_n^{(3/2)}(c)}{2n-1} z^{2n-1}  \right)
\end{align*}
where $Q_n^{(\lambda)}(c)$ is the $n$-Gegenbauer polynomial.
These polynomials satisfy the second order linear ODE:
\begin{align*}
(1-c^2)y''-(2\lambda+1)cy'+n(n+2\lambda)y=0
\end{align*}
where the derivative is with respect to $c$.  Thus for $\lambda =3/2$ we get
\begin{equation}\label{2ndODE}
(1-c^2)(Q_n^{(3/2)})''(c)-4c(Q_n^{(3/2)})'(c)+n(n+3)Q_n^{(3/2)}(c)=0.
\end{equation}
Rewrite the expansion formula for $P_{-4}(c,z)$ to get
\begin{equation}\label{2nODE.1}
 z^{-1}(1-2 c z^2+z^4)^{-1/2}P_{-4}(c,z)=  \sum_{n=0}^\infty  \frac{4cQ_n^{(3/2)}(c)}{2n+1} z^{2n+1}- \sum_{n=0}^\infty  \frac{Q_n^{(3/2)}(c)}{2n-1} z^{2n-1}   ,
\end{equation}
and apply the differential operator $\displaystyle{L:=(1-c^2)\frac{d^2}{dc^2}-4c\frac{d}{dc}}$ to
 the right hand side to get
\begin{align*}
 L\left( 4cQ_n^{(3/2)}(c)  \right)  &=\left((1-c^2)\frac{d^2}{dc^2}-4c\frac{d}{dc}\right)(4cQ_n^{(3/2)}(c)) \\
&=-8(n+1)Q_{n+1}^{(3/2)}(c)- 4c (n^2+n-2)Q_n^{(3/2)}(c) \\
 \end{align*}
 using the identity
 \begin{equation}\label{wikiref}
 (1-c^2)\frac{d}{dc}\left(Q_n^{(\lambda)}(c)\right)=(n+2\lambda)cQ_n^{(\lambda)}(c)-(n+1)Q_{n+1}^{(\lambda)}(c) .
 \end{equation}
 Using a number of simplifications that we have put on http://arxiv.org/archive/math we arrive at 
After some simplification we get
\begin{align*}
L\left(\sum_{n=0}^\infty \frac{4cQ_n^{(3/2)}(c)}{2n+1} z^{2n+1}\right)
&= \sum_{n=0}^\infty \frac{-8(n+1)Q_{n+1}^{(3/2)}(c)- 4c (n^2+n-2)Q_n^{(3/2)}(c) }{2n+1} z^{2n+1}   \\ \\
&=-4 \frac{1}{z(z^4-2cz^2+1)^{3/2}}-4 \int\frac{1}{z^2(z^4-2cz^2+1)^{3/2}}\,dz    \\
&\quad - cz^2\frac{d}{dz}\left(\frac{1}{(z^4-2cz^2+1)^{3/2}}\right)  -\frac{cz}{(z^4-2cz^2+1)^{3/2}}\\
&\quad+ 9 c\int\frac{1}{(z^4-2cz^2+1)^{3/2}}\,dz.
\end{align*}
In addition we have
\begin{align*}
L\left( \sum_{n=0}^\infty  \frac{Q_n^{(3/2)}(c)}{2n-1} z^{2n-1} \right)&=-\sum_{n=0}^\infty  \frac{n(n+3)Q_n^{(3/2)}(c)}{2n-1} z^{2n-1}   \\
&=-\left(\frac{1}{4}z^2\frac{d^2}{dz^2}+\frac{9}{4}z\frac{d}{dz}+\frac{7}{4}\right)\sum_{n=0}^\infty  \frac{Q_n^{(3/2)}(c)}{2n-1} z^{2n-1}  \\
&=-\frac{1}{4}z^2\frac{d}{dz}\left( \frac{1}{z^2(z^4-2c z^2+1)^{3/2}}\right)  - \frac{9}{4z(z^4-2c z^2+1)^{3/2}} \\
&\quad -\frac{7}{4}\int\frac{1}{z^2(z^4-2c z^2+1)^{3/2}}\,dz.
\end{align*}
Thus the right hand side of \eqnref{2nODE.1} becomes
\begin{align*}
&\quad -4 \frac{1}{z(z^4-2cz^2+1)^{3/2}}-4 \int\frac{1}{z^2(z^4-2c z^2+1)^{3/2}}\,dz    \\
&\quad - cz^2\frac{d}{dz}\left(\frac{1}{(z^4-2cz^2+1)^{3/2}}\right)  -\frac{cz}{(z^4-2cz^2+1)^{3/2}}\\
&\quad+ 9 c\int\frac{1}{(z^4-2c z^2+1)^{3/2}}\,dz \\
&\quad +\frac{1}{4}z^2\frac{d}{dz}\left( \frac{1}{z^2(z^4-2c z^2+1)^{3/2}}\right)  + \frac{9}{4z(z^4-2c z^2+1)^{3/2}} \\
&\quad +\frac{7}{4}\int\frac{1}{z^2(z^4-2c z^2+1)^{3/2}}\,dz  \\  \\
&= - \frac{7}{4z(z^4-2cz^2+1)^{3/2}}    + \frac{6c z^2\left(z^2-c\right)}{\left(z^4-2 c z^2+1\right)^{5/2}}\\
&\quad  -\frac{cz}{(z^4-2cz^2+1)^{3/2}}-\frac{2 z^2\left(4 z^4-5 c z^2+1\right)}{z^3 \left(-2 c z^2+z^4+1\right)^{5/2}}    \\
&\quad+\frac{9}{4} \frac{1}{z\sqrt{z^4-2cz^2+1}}P_{-4}(c,z) .
\end{align*}

 Applying the differential operator $L$ to
 the left hand side of \eqnref{2nODE.1}, we get
\begin{align*}
L&\left(z^{-1}(1-2 c z^2+z^4)^{-1/2}P_{-4}(c,z)\right)   \\
&= (1-c^2)\frac{d^2}{dc^2}\left(z^{-1}(1-2 c z^2+z^4)^{-1/2}P_{-4}(c,z)\right) \\
&\quad -4c\frac{d}{dc}\left(z^{-1}(1-2 c z^2+z^4)^{-1/2}P_{-4}(c,z)\right)  \\
&= \frac{z(-4c+(3+5c^2)z^2-4cz^4)}{ (1-2 c z^2+z^4)^{5/2}} P_{-4}(c,z) \\
&\quad + \frac{ -4c+(2+6c^2)z^2-4cz^4}{z(1-2 c z^2+z^4)^{3/2}}\frac{d}{dc}\left(P_{-4}(c,z)\right) \\
&\quad +\frac{(1-c^2)}{z(1-2 c z^2+z^4)^{1/2}}\frac{d^2}{dc^2}\left(P_{-4}(c,z)\right).
\end{align*}
Hence we have
\begin{align*}
&\frac{z(-4c+(3+5c^2)z^2-4cz^4)}{ (1-2 c z^2+z^4)^{5/2}} P_{-4}(c,z)  + \frac{ -4c+(2+6c^2)z^2-4cz^4}{z(1-2 c z^2+z^4)^{3/2}}\frac{d}{dc}\left(P_{-4}(c,z)\right) \\
&  +\frac{(1-c^2)}{z(1-2 c z^2+z^4)^{1/2}}\frac{d^2}{dc^2}\left(P_{-4}(c,z)\right) \\ \\
&=  \frac{-4cz^2-7}{4z(z^4-2cz^2+1)^{3/2}}   - cz^2\frac{d}{dz}\left(\frac{1}{(z^4-2cz^2+1)^{3/2}}\right) \\
&\quad +\frac{1}{4}z^2\frac{d}{dz}\left( \frac{1}{z^2(z^4-2c z^2+1)^{3/2}}\right)  +\frac{9}{4} c\frac{1}{z\sqrt{z^4-2cz^2+1}}P_{-4}(c,z)
\end{align*}
as
\begin{align*}
\frac{d}{dz}\left(z^{-2}(z^4-2c z^2+1)^{-3/2}\right)
&=\frac{-2(4z^4-5c z^2+1)}{z^{3}(z^4-2c z^2+1)^{5/2}}.
\end{align*}
As a consequence
\begin{align*}
&\frac{z(-4c+(3+5c^2)z^2-4cz^4)}{ (1-2 c z^2+z^4)^{5/2}} P_{-4}(c,z)  + \frac{ -4c+(2+6c^2)z^2-4cz^4}{z(1-2 c z^2+z^4)^{3/2}}\frac{d}{dc}\left(P_{-4}(c,z)\right) \\
&  +\frac{(1-c^2)}{z(1-2 c z^2+z^4)^{1/2}}\frac{d^2}{dc^2}\left(P_{-4}(c,z)\right) \\ \\
&= - \frac{4cz^2+7}{4z(z^4-2cz^2+1)^{3/2}}   - c\left(\frac{6 z^3 \left(c-z^2\right)}{\left(z^4-2 c z^2+1\right)^{5/2}}\right) \\
&\quad -\frac{2 \left(4 z^4-5 c z^2+1\right)}{z \left(z^4-2 c z^2+1\right)^{5/2}}     \\
&\quad+\frac{9}{4} \frac{1}{z\sqrt{z^4-2cz^2+1}}P_{-4}(c,z)
 \end{align*}
%
%
which gives us
\begin{align*}
-\frac{9}{4}&+5cz^2-\left(\frac{15}{4}+4c^2\right)z^4+5cz^6 \\
&=\left(-\frac{9}{4} (z^4-2cz^2+1)^2   +z^2(-4c+(3+5c^2)z^2-4cz^4)\right)P_{-4}(c,z)  \\
&\quad  + (-4c+(2+6c^2)z^2-4cz^4)(z^4-2cz^2+1)\frac{d}{dc}\left(P_{-4}(c,z)\right) \\
& \quad  +(1-c^2)(z^4-2cz^2+1)^2\frac{d^2}{dc^2}\left(P_{-4}(c,z)\right).
\end{align*}
Expanding this out in detail
and writing $P_{-4,k}(c)$ as $P_k$ we will obtain

 \begin{align}
0&=-9 P_{n}+20cP_{n-2}-\left(16 c^2+6\right)  P_{n-4}  + 20cP_{ n-6}   -9 P_{ n-8}  \label{longequationelliptic1}  \\
&\hskip 40pt -16c P'_{ n} + 8(1+7c^2)P'_{ n-2} -48c(1+c^2)P'_{ n-4}    +8(1+7c^2)P'_{ n-6}   -16c P'_{ n-8}  \notag \\
&\hskip 40pt + 4(1-c^2)\left(P_{ n}''  - 4 c  P''_{ n-2}  +  2(1+2c^2) P''_{ n-4}- 4c  P''_{ n-6}  +P''_{ n-8} \right).\notag
\end{align}

We now differentiate with respect to $c$ the recursion relations
\begin{equation}\label{recursionelliptic1}
(6+2k)P_{k+4}
=4k cP_{k+2} -2(k-3)P_{k}
\end{equation}
 to get
\begin{align}
(6+2k)P_{k+4}'
&=4k P_{k+2}+4kcP'_{k+2}-2(k-3)P_{k}'\label{1storderelliptic1} \\
(6+2k)P_{k+4}''
&=8k P'_{k+2} +4kcP''_{k+2}-2(k-3)P_{k}'' .\label{2ndorderelliptic1}
\end{align}
After setting $k=n-8$ in the last equation we get
\begin{align}
0=-(2n-10)P_{n-4}''   +8(n-8) P'_{n-6}+4(n-8)cP''_{n-6}-2(n-11)P_{n-8}'' .
\end{align}
So multiplying \eqnref{longequationelliptic1} by $(n-11)$ and adding it to $2(1-c^2)$ times the above gives us
\begin{align}
0\label{longequationelliptic1.2}
&=   (n-11)\Big(   -9 P_{n}+20cP_{n-2}-\left(16 c^2+6\right)  P_{n-4}  + 20cP_{ n-6}   -9 P_{ n-8}  \notag\\
&\hskip 40pt -16c P'_{ n} + 8(1+7c^2)P'_{ n-2} -48c(1+c^2)P'_{ n-4}        -16c P'_{ n-8}  \notag \\
&\hskip 40pt + 4(1-c^2)\left(P_{ n}''  - 4 c  P''_{ n-2}     \right)  \Big) \notag\\
&\quad  +8 \left( c^2 (5 n-61)+  (3 n-27)\right) P'_{n-6}-  4(1-c^2)(4c^2(n-11)+n-17) P''_{n-4}   \notag  \\
&\quad +8(n-14)c(1-c^2)P''_{n-6}.
\end{align}

Setting $k=n-8$ in \eqnref{1storderelliptic1} we get
\begin{equation}
0=-(2n-10)P_{n-4}'
+4(n-8)P_{n-6}+4(n-8)cP'_{n-6}-2(n-11)P_{n-8}'
\end{equation}
Multiplying this equation by $-8c$ and add to the previous equation we get
\begin{align}
0
&=    (n-11)\Big(   -9 P_{n}+20cP_{n-2}-\left(16 c^2+6\right)  P_{n-4}     -9 P_{ n-8}  \notag\\
&\hskip 40pt   -16c P'_{ n} + 8(1+7c^2)P'_{ n-2} + 4(1-c^2)\left(P_{ n}''  - 4 c  P''_{ n-2}     \right)  \Big) \notag\\
&\quad+16 c \left(-3 \left(c^2+1\right) (n-11)+n-5\right)P_{n-4}'   -  4(1-c^2)(4c^2(n-11)+n-17) P''_{n-4}   \notag  \\
&\quad -12c(n-3)P_{ n-6} \notag    +8 \left(c^2 (n-29)+3 (n-9)\right)P'_{n-6}  +8(n-14)c(1-c^2)P''_{n-6}. \notag
\end{align}

Finally if we multiply the previous equation by 2 and add it to $-9$ times \eqnref{recursionelliptic1} we get
\begin{align}
0
&=  2  (n-11)\Big(   -9 P_{n}+20cP_{n-2}   -16c P'_{ n} + 8(1+7c^2)P'_{ n-2} + 4(1-c^2)\left(P_{ n}''  - 4 c  P''_{ n-2}     \right)  \Big) \notag\\
&\quad+\left(6 (n+7)-32 c^2 (n-11)\right)P_{n-4}    +32 c \left(-3 \left(c^2+1\right) (n-11)+n-5\right)P_{n-4}'   \notag \\
&\quad  -  8(1-c^2)(4c^2(n-11)+n-17) P''_{n-4}   \notag  \\
&\quad -60c(n-6)P_{ n-6} \notag    +16 \left(c^2 (n-29)+3 (n-9)\right)P'_{n-6}  +16(n-14)c(1-c^2)P''_{n-6}.
\notag 
\end{align}

This is an equation without the $P_{n-8}$ in it.   We now get rid of the $P_{n-6}$'s in them.

After setting $k=n-6$ in \eqnref{2ndorderelliptic1} we get
\begin{align}
0=-(2n-6)P_{n-2}''    +8(n-6) P'_{n-4} +4(n-6)cP''_{n-4} -2(n-9)P_{n-6}''   .
\end{align}
Now we multiply the previous equation by $-8(n-14)c(1-c^2)$ and add it to $n-9$ times the equation before it, we obtain
\begin{align}
0
&=   2  (n-11)(n-9)\Big(   -9 P_{n}+20cP_{n-2}   -16c P'_{ n} + 8(1+7c^2)P'_{ n-2} + 4(1-c^2) P_{ n}''     \Big) \notag\\
&\quad+(n-9)\left(6 (n+7)-32 c^2 (n-11)\right)P_{n-4}        \notag \\
&\quad +16 c \left(c^2-1\right) \left(n^2-23 n+156\right)P_{n-2}''   -32 c \left(c^2 \left(n^2-20 n+129\right)+4 n^2-86 n+420\right)P'_{n-4}  \notag \\
&\quad -8 \left(c^2-1\right) \left(60 c^2+n^2-26 n+153\right)P''_{n-4} \notag \\
&\quad -60c(n-9)(n-6)P_{ n-6} \notag    +16(n-9) \left(c^2 (n-29)+3 (n-9)\right)P'_{n-6}     \notag
\end{align}
From \eqnref{1storderelliptic1} with $k=n-6$ one has
\begin{align}
0=-(2n-6)P_{n-2}'   +4(n-6) P_{n-4}+4(n-6)cP'_{n-4}-2(n-9)P_{n-6}' .
\end{align}

We multiply this equation by $8 \left(c^2 ( n-29)+3 (n-9)\right)$ and add it to the previous equation to get
\begin{align}
0
&=     2  (n-11)(n-9)\Big(   -9 P_{n}+20cP_{n-2}   -16c P'_{ n} + 8(1+7c^2)P'_{ n-2} + 4(1-c^2) P_{ n}''     \Big) \notag\\
&\quad - 8(2n-6) \left(c^2 ( n-29)+3 (n-9)\right)P_{n-2}'   +16 c \left(c^2-1\right) \left(n^2-23 n+156\right)P_{n-2}''     \notag \\
&\quad  -6 \left(80 c^2 (n-5)-17 n^2+242 n-801\right)P_{n-4}\notag  -32 c \left(15 c^2 (n-3)+n^2-41 n+258\right)P'_{n-4} \notag \\
&\quad -8 \left(c^2-1\right) \left(60 c^2+n^2-26 n+153\right)P''_{n-4} \notag \\
&\quad -60c(n-9)(n-6)P_{ n-6} \notag
\end{align}

From \eqnref{recursionelliptic1} with $k=n-6$ one has
\begin{align*}
0=-(2n-6)P_{n-2}  +4(n-6) cP_{n-4} -2(n-9)P_{n-6}
\end{align*}
We multiply this equation by $-30c(n-6)$ and add it to the previous equation to get
\begin{align}
0
&=     2  (n-11)(n-9)\Big(   -9 P_{n}   -16c P'_{ n} + 8(1+7c^2)P'_{ n-2} + 4(1-c^2) P_{ n}''     \Big) \notag\\
&\quad +20 c \left(5 n^2-67 n+252\right)P_{n-2}  - 8(2n-6) \left(c^2 ( n-29)+3 (n-9)\right)P_{n-2}'   \notag  \\
&\quad +16 c \left(c^2-1\right) \left(n^2-23 n+156\right)P_{n-2}''     \notag \\
&\quad  +6 \left(-20 c^2 (n-4)^2+17 n^2-242 n+801\right)P_{n-4}\notag  -32 c \left(15 c^2 (n-3)+n^2-41 n+258\right)P'_{n-4} \notag \\
&\quad -8 \left(c^2-1\right) \left(60 c^2+n^2-26 n+153\right)P''_{n-4} \notag
\end{align}
The above equation now does not have the index $n-6$ in it.  Now
we want to eliminate the indices with $n-4$ in them.

From \eqnref{2ndorderelliptic1} with $k=n-4$ one has
\begin{align*}
0=-(2n-2)P_{n}''   +8(n-4) P'_{n-2} +4(n-4)cP''_{n-2}-2(n-7)P_{n-4}'' .
\end{align*}
We multiply this equation by $-4  (c^2-1 )  (60 c^2+n^2-26 n+153 ) $ and add it to $n-7$ times the previous equation to get
\begin{align}
0
&=      2  (n-11)(n-9)(n-7)\Big(   -9 P_{n}   -16c P'_{ n}    \Big) \notag\\
&\quad +20 c(n-7) \left(5 n^2-67 n+252\right)P_{n-2}      \notag  \\
&\quad  +6 (n-7)\left(-20 c^2 (n-4)^2+17 n^2-242 n+801\right)P_{n-4}\notag   \\
&\quad -32 c(n-7) \left(15 c^2 (n-3)+n^2-41 n+258\right)P'_{n-4} \notag \\
&\quad+480 \left(c^2-1\right) \left(c^2 (n-1)-n+9\right)P_{n}''  \notag\\
&\quad-32 \left(60 c^4 (n-4)+c^2 \left(-2 n^3+45 n^2-484 n+1749\right)+15 \left(n^2-14 n+45\right)\right)P'_{n-2}\notag  \\
&\quad  -960 c \left(c^2-1\right) \left(c^2 (n-4)-n+8\right)P''_{n-2}  \notag
\end{align}

From \eqnref{1storderelliptic1} with $k=n-4$ one has
\begin{align*}
0=-(2n-2)P_{n}'   +4(n-4) P_{n-2}+4(n-4)cP'_{n-2}-2(n-7)P_{n-4}'.
\end{align*}
We multiply this equation by $-16 c   (15 c^2 (n-3)+n^2-41 n+258 ) $ and add it to the previous equation to get
\begin{align}
0&=      -18  (n-11)(n-9)(n-7) P_{n}    \notag\\
&\quad +480 c \left(c^2 \left(n^2-4 n+3\right)-n^2+4 n+29\right)P_{n}' \notag   \\
&\quad+480 \left(c^2-1\right) \left(c^2 (n-1)-n+9\right)P_{n}''  \notag\\
&\quad +12 c \left(-80 c^2 \left(n^2-7 n+12\right)+3 n^3+70 n^2-1049 n+2564\right)P_{n-2}      \notag  \\
&\quad -480 \left(2 c^4 \left(n^2-5 n+4\right)-3 c^2 \left(n^2-8 n+7\right)+n^2-14 n+45\right)P'_{n-2} \notag   \\
&\quad  -960 c \left(c^2-1\right) \left(c^2 (n-4)-n+8\right)P''_{n-2}  \notag   \\
&\quad  +6 (n-7)\left(-20 c^2 (n-4)^2+17 n^2-242 n+801\right)P_{n-4}\notag
\end{align}

From \eqnref{recursionelliptic1} with $k=n-4$ one has
\begin{align*}
0=-2(n-1)P_{n}   +4(n-4)cP_{n-2}-2(n-7)P_{n-4}.
\end{align*}
We multiply this equation by $3\left(-20 c^2 (n-4)^2+17 n^2-242 n+801\right)$ and add it to the previous equation to get
\begin{align}
0
&=      120 (n-4)^2 \left(c^2 (n-1)-n+9\right) P_{n}    \notag\\
&\quad +480 c \left(c^2 \left(n^2-4 n+3\right)-n^2+4 n+29\right)P_{n}' \notag   \\
&\quad+480 \left(c^2-1\right) \left(c^2 (n-1)-n+9\right)P_{n}''  \notag\\
&\quad -240 c (n-2)^2 \left(c^2 (n-4)-n+8\right)P_{n-2}      \notag  \\
&\quad -480 \left(2 c^4 \left(n^2-5 n+4\right)-3 c^2 \left(n^2-8 n+7\right)+n^2-14 n+45\right)P'_{n-2} \notag   \\
&\quad  -960 c \left(c^2-1\right) \left(c^2 (n-4)-n+8\right)P''_{n-2}.  \notag
\end{align}
This can be rewritten as

\begin{align}
0&=     (n-4)^2 \left(c^2 (n-1)-n+9\right) P_{n}    \label{elliptic1eqn1}\\
&\quad +4 c \left(c^2 \left(n^2-4 n+3\right)-n^2+4 n+29\right)P_{n}' \notag   \\
&\quad+4 \left(c^2-1\right) \left(c^2 (n-1)-n+9\right)P_{n}''  \notag\\
&\quad -2 c (n-2)^2 \left(c^2 (n-4)-n+8\right)P_{n-2}      \notag  \\
&\quad -4 \left(2 c^4 \left(n^2-5 n+4\right)-3 c^2 \left(n^2-8 n+7\right)+n^2-14 n+45\right)P'_{n-2} \notag   \\
&\quad  -8 c \left(c^2-1\right) \left(c^2 (n-4)-n+8\right)P''_{n-2}.\notag
\end{align}
We have now reduced to a differential equation with only the
indices $n$ and $n-2$.   There is a bit of a trick to reduce it
down to a linear ODE with only the index $n$ in it.
The somewhat vague idea is
to find more equations to in order to cancel all but terms with
the index $n$ in them.

From \eqnref{2ndorderelliptic1} with $k=n-2$ one has
\begin{align*}
0=-2(n+1)P_{n+2}''   +8(n-2) P'_{n} +4(n-2)cP''_{n}-2(n-5)P_{n-2}''  .
\end{align*}

We multiply this equation by $ -4  c(c^2-1)(c^2(n-4)+8-n) $ and add it to $n-5$ times the previous equation to get
\begin{align}
0
&= -2(n-5) c  (n-2)^2 \left(c^2 (n-4)-n+8\right)P_{n-2}      \notag  \\
&\quad -4 (n-5) \left(2 c^4 \left(n^2-5 n+4\right)-3 c^2 \left(n^2-8 n+7\right)+n^2-14 n+45\right)P'_{n-2} \notag   \\
&\quad + (n-5)  ( (n-4)^2 \left(c^2 (n-1)-n+9\right) P_{n}    \notag\\
&\quad +4 c \left(-8 c^4 \left(n^2-6 n+8\right)+c^2 \left(n^3+7 n^2-105 n+177\right)-n^3+n^2+89 n-273\right)P'_{n} \notag \\
&\quad -4 \left(c^2-1\right) \left(4 c^4 \left(n^2-6 n+8\right)+c^2 \left(-5 n^2+46 n-69\right)+n^2-14 n+45\right)P''_{n} \notag \\
&\quad +8(n+1)c(c^2-1)(c^2(n-4)+8-n)(n+1)P_{n+2}''    \notag
\end{align}
From \eqnref{1storderelliptic1} with $k=n-2$ one has
\begin{align*}
0= -2(n+1)P_{n+2}'   +4(n-2) P_{n}+4(n-2)cP'_{n}-2(n-5)P_{n-2}'
\end{align*}

We multiply this equation by
\begin{equation*}
 -2 (2c^4 (n-4)(n-1)-3c^2(n-7)(n-1)+ (n-9)(n-5))
\end{equation*}
and add it to the previous equation to get

\begin{align}
0
&=  -2(n-5) c  (n-2)^2 \left(c^2 (n-4)-n+8\right)P_{n-2}      \notag  \\
&\quad + \left(-16 c^4 \left(n^3-7 n^2+14 n-8\right)+c^2 \left(n^4+10 n^3-171 n^2+416 n-256\right)-n^2 \left(n^2-14 n+45\right)
\right)P_{n}    \notag\\
&\quad-4 c (n+1) \left(4 c^4 \left(n^2-6 n+8\right)+c^2 \left(-7 n^2+60 n-93\right)+3 \left(n^2-12 n+31\right)\right)P'_{n} \notag \\
&\quad -4 \left(c^2-1\right) \left(4 c^4 \left(n^2-6 n+8\right)+c^2 \left(-5 n^2+46 n-69\right)+n^2-14 n+45\right)P''_{n} \notag \\
&\quad +4(2c^4 (n-4)(n-1)-3c^2(n-7)(n-1)+ (n-9)(n-5))(n+1)P_{n+2}'  \notag  \\
&\quad +8(n+1)c(c^2-1)(c^2(n-4)+8-n)(n+1)P_{n+2}''    \notag
\end{align}
Next we get rid of the $P_{n-2}$ term by multiplyfing \eqnref{recursionelliptic1} with $k=n-2$;
\begin{equation*}
0=-2(n+1)P_{n+2}   +4(n-2) cP_{n}-2(n-5)P_{n-2},
\end{equation*}

 by $ -c(n-2)^2 \left(c^2 (n-4)-n+8\right)$
and adding it to the previous equation:
\begin{align}
0
&=-n^2 \left(4 c^4 \left(n^2-6 n+8\right)+c^2 \left(-5 n^2+46 n-69\right)+n^2-14 n+45\right)P_{n}    \notag\\
&\quad-4 c (n+1) \left(4 c^4 \left(n^2-6 n+8\right)+c^2 \left(-7 n^2+60 n-93\right)+3 \left(n^2-12 n+31\right)\right)P'_{n} \notag \\
&\quad -4 \left(c^2-1\right) \left(4 c^4 \left(n^2-6 n+8\right)+c^2 \left(-5 n^2+46 n-69\right)+n^2-14 n+45\right)P''_{n} \notag \\
&\quad +2c(n+1)(n-2)^2 \left(c^2 (n-4)-n+8\right)P_{n+2}    \notag \\
&\quad +4(n+1)(2c^4 (n-4)(n-1)-3c^2(n-7)(n-1)+ (n-9)(n-5))P_{n+2}'  \notag  \\
&\quad +8(n+1)^2c(c^2-1)(c^2(n-4)+8-n) P_{n+2}''  \notag
\end{align}

Letting $n\mapsto n-2$ in the above equation we get
\begin{align}
0&=-(n-2)^2 \left(4 c^4 \left(n^2-10 n+24\right)+c^2 \left(-5 n^2+66 n-181\right)+n^2-18 n+77\right)P_{n-2}    \label{elliptic1eqn2}\\
&\quad-4 c (n-1) \left(4 c^4 \left(n^2-10 n+24\right)+c^2 \left(-7 n^2+88 n-241\right)+3 \left(n^2-16 n+59\right)\right)P'_{n-2} \notag \\
&\quad -4 \left(c^2-1\right) \left(4 c^4 \left(n^2-10 n+24\right)+c^2 \left(-5 n^2+66 n-181\right)+n^2-18 n+77\right)P''_{n-2} \notag \\
&\quad +2c(n-1)(n-4)^2 \left(c^2 (n-6)-n+10\right)P_{n}    \notag \\
&\quad +4(n-1)(2c^4 (n-6)(n-3)-3c^2(n-9)(n-3)+ (n-11)(n-7))P_{n}'  \notag  \\
&\quad +8(n-1)^2c(c^2-1)(c^2(n-6)-n+10)P_{n}''  \notag
\end{align}

We compare this to \eqnref{elliptic1eqn1}
\begin{align}
0&= -2 c (n-2)^2 \left(c^2 (n-4)-n+8\right)P_{n-2}      \label{elliptic1eqn3}  \\
&\quad -4 \left(2 c^4 \left(n^2-5 n+4\right)-3 c^2 \left(n^2-8 n+7\right)+n^2-14 n+45\right)P'_{n-2} \notag   \\
&\quad  -8 c \left(c^2-1\right) \left(c^2 (n-4)-n+8\right)P''_{n-2} \notag  \\
&\quad +(n-4)^2 \left(c^2 (n-1)-n+9\right) P_{n}   \notag \\
&\quad +4 c \left(c^2 \left(n^2-4 n+3\right)-n^2+4 n+29\right)P_{n}' \notag   \\
&\quad+4 \left(c^2-1\right) \left(c^2 (n-1)-n+9\right)P_{n}''  \notag
\end{align}
Our goal is to eliminate the $P_{n-2}''  $ term, then the $P_{n-2}'  $ and finally the $P_{n-2}$ to arrive at a differential equation with just the derivatives of $P_{n}$ in it.   We first have to lower the degree of $c$ in the polynomial in front of $P_{n-2}''  $.
 Thus if we multiply \eqnref{elliptic1eqn3} by $ -2c(n-6)$
 and add it to \eqnref{elliptic1eqn2} we get
\begin{align}
0
&=(n-11) (n-2)^2 \left(c^2 (n+1)-n+7\right)P_{n-2} \notag  \\
&\quad +4 c (n-11) \left(c^2 \left(n^2-1\right)-n^2+33\right))P_{n-2}'  \notag  \\
&\quad +4(n-11)  \left(c^2-1\right) \left(c^2 (n+1)-n+7\right)P_{n-2}''    \notag  \\
&\quad-8 c (n-11) (n-4)^2 \left(c^2 (n-7)-n-1\right) P_{n}   \notag\\
&\quad -4 (n-11) \left(c^2 \left(n^2-8 n+39\right)-n^2+8 n-7\right)P_{n}' \notag  \\
&\quad -32 c(n-11)  \left(c^2-1\right) P_{n}''   \notag
\end{align}

If $n\neq 11$ we get
\begin{align} \label{ellipticeqn3}
0&=  (n-2)^2 \left(c^2 (n+1)-n+7\right)P_{n-2}   \\
&\quad +4 c   \left(c^2 \left(n^2-1\right)-n^2+33\right))P_{n-2}'  \notag  \\
&\quad +4  \left(c^2-1\right) \left(c^2 (n+1)-n+7\right)P_{n-2}''    \notag  \\
&\quad-8 c  (n-4)^2  P_{n}   \notag\\
&\quad -4  \left(c^2 \left(n^2-8 n+39\right)-n^2+8 n-7\right)P_{n}' \notag  \\
&\quad -32 c  \left(c^2-1\right) P_{n}'' .  \notag
\end{align}
 We lower the degree of $c$ in the polynomial in front of $P_{n-2}''  $ another time by multiplying this last equation by $2c(n-4)$ and add it to $n+1$ times \eqnref{elliptic1eqn1} to get
\begin{align}
0
&= 8 c (n-9) (n-2)^2P_{n-2} \notag  \\
&\quad +4 (n-9) \left(c^2 \left(n^2-4 n+27\right)-n^2+4 n+5\right)P_{n-2}'  \notag  \\
&\quad +32(n-9)  c \left(c^2-1\right) P_{n-2}''    \notag  \\
&\quad+(n-9) (n-4)^2 \left(c^2 (n-7)-n-1\right) P_{n}   \notag\\
&\quad-4 c (n-9) \left(c^2 \left(n^2-12 n+35\right)-n^2+12 n-3\right)P_{n}' \notag  \\
&\quad +4(n-9) \left(c^2-1\right)  \left(c^2 (n-7)-n-1\right)P_{n}'' .  \notag
\end{align}
So that if $n\neq 9,11$ one has
\begin{align} \label{ellipticeqn4}
0&= 8 c  (n-2)^2P_{n-2}   \\
&\quad +4   \left(c^2 \left(n^2-4 n+27\right)-n^2+4 n+5\right)P_{n-2}'  \notag  \\
&\quad +32   c \left(c^2-1\right) P_{n-2}''    \notag  \\
&\quad+  (n-4)^2 \left(c^2 (n-7)-n-1\right) P_{n}   \notag\\
&\quad-4 c   \left(c^2 \left(n^2-12 n+35\right)-n^2+12 n-3\right)P_{n}' \notag  \\
&\quad +4 \left(c^2-1\right)  \left(c^2 (n-7)-n-1\right)P_{n}''    \notag
\end{align}

We lower the degree of $c$ in the polynomial in front of $P_{n-2}''  $ another time by multiplying this last equation by $c(n+1)$ and add it to $-8$ times \eqnref{ellipticeqn3} to get

\begin{align}
0
&= 8 (n-7) (n-2)^2P_{n-2} \notag  \\
&\quad +4 c (n-7) \left(c^2 \left(n^2-4 n-5\right)-n^2+4 n+37\right)P_{n-2}'  \notag  \\
&\quad +32 (c^2-1) (n-7)P_{n-2}''    \notag  \\
&\quad +c (n-7) (n-4)^2 \left(c^2 (n+1)-n-9\right)P_{n}   \notag\\
&\quad -4 (n-7) \left(c^4 \left(n^2-4 n-5\right)-c^2 \left(n^2+4 n-45\right)+8 (n-1)\right) P_{n}' \notag  \\
&\quad  +4  c (c^2-1) (n-7) \left(c^2 (n+1)-n-9\right)P_{n}''   \notag
\end{align}
Which if $n\neq 7,9,11$, then we have

\begin{align} \label{ellipticeqn5}
0&= 8   (n-2)^2P_{n-2}    \\
&\quad +4 c   \left(c^2 \left(n^2-4 n-5\right)-n^2+4 n+37\right)P_{n-2}'  \notag  \\
&\quad +32 (c^2-1)  P_{n-2}''    \notag  \\
&\quad +c   (n-4)^2 \left(c^2 (n+1)-n-9\right)P_{n}   \notag\\
&\quad -4   \left(c^4 \left(n^2-4 n-5\right)-c^2 \left(n^2+4 n-45\right)+8 (n-1)\right) P_{n}' \notag  \\
&\quad  +4  c (c^2-1)   \left(c^2 (n+1)-n-9\right)P_{n}''   \notag
\end{align}
 We now want to get rid of the term with $P_{n-2}''$ in it.  This is done by
multiplying the previous equation by $c$ and add it to $-1$ times \eqnref{ellipticeqn4} we get
\begin{align}
0
&= 4  (c^2-1 )^2  (n-5 )(n+1)P_{n-2}'  \notag  \\
&\quad +  (c^2-1 )^2 (n-4)^2 (n+1) P_{n}   \notag\\
&\quad -4 c (c^2-1 )^2  (n-5 )(n+1)P_{n}' \notag  \\
&\quad +4  (c^2-1 )^3 (n+1)P_{n}'' \notag
\end{align}
Thus as $c\neq \pm 1$ and we are assuming $n\neq  -1,7,9,11$ then we have
\begin{align} \label{elliptic1eqn6}
0&= 4    (n-5)P_{n-2}'  +   (n-4)^2  P_{n}    -4 c   (n-5  )P_{n}'   +4  (c^2-1 )  P_{n}''.
\end{align}
If we differentiate this with respect to $c$ we get
\begin{align}
0&= 4    (n-5)P_{n-2}''  +   (n-4)^2  P_{n} '   -4    (n-5  )P_{n}'  -4   c (n-5  )P_{n}'' +8c  P_{n}''+4  (c^2-1 )  P_{n}'''   \\
&= 4    (n-5)P_{n-2}''  +   (n-6)^2  P_{n} '     -4   c (n-7  ) P_{n}'' +4  (c^2-1 )  P_{n}'''. \notag
\end{align}
Now we work on the coefficient in front of $P_{n-2}'$ to eliminate it.

We can multiply the previous equation by $-8(c^2-1)$ and add to $n-5$ times \eqnref{ellipticeqn5} to give us

\begin{align}
0
&=  8   (n-5) (n-2)^2P_{n-2}  \notag  \\
&\quad +4 c    (n-5) (c^2  (n^2-4 n-5 )-n^2+4 n+37 ) P_{n-2}'  \notag  \\
&\quad +c   (n-5) (n-4)^2  (c^2 (n+1)-n-9 )P_{n}   \notag\\
&\quad+4  (c^4  (-(n-5)^2 ) (n+1)+c^2  (n^3-3 n^2-41 n+153 )-6 n^2+24 n+32 ) P_{n}' \notag  \\
&\quad +4 c ( c^2-1) (8 ( n-7) + (n-5) ( c^2 (1 + n)-9 - n))P_{n}''   \notag  \\
&\quad   -32 ( c^2-1)^2  P_{n}''' \notag
\end{align}

Now we multiply \eqnref{elliptic1eqn6} by $c(37 + 4 n - n^2 + c^2 (n-5)(n+1))$ and add it to $-1$ times the equation above to give us
\begin{align}
0
&= -8 (n - 5) (n - 2)^2 P_{n-2}   + 8 c (n-4)^2 ( n-1) P_{n}   \notag\\
&\quad  -8  (c^2-1 )  (3 n^2-12 n-16 )P_{n}'  +192 c  (c^2-1 )P_{n}''     +32 ( c^2-1)^2 P_{n}''' \notag
\end{align}
Thus if $c\neq \pm 1$, $n\neq -1,7,9,11$ we have

\begin{align}
0&=- (n - 5) (n - 2)^2 P_{n-2}  +  c (n-4)^2 ( n-1) P_{n}   \notag\\
&\quad  -  (c^2-1 )  (3 n^2-12 n-16 )P_{n}' +24 c  (c^2-1 )P_{n}''      +4 ( c^2-1)^2 P_{n}'''\notag
\end{align}

 If we differentiate this with respect to $c$ we get
 \begin{align}
0
&= - (n - 5) (n - 2)^2 P_{n-2}'  \notag  \\
&\quad +   (n-4)^2 ( n-1) P_{n}  \notag\\
&\quad  +  c (n^3-15n^2+48n+16)P_{n}' \notag  \\
&\quad +(-c^2(3n^2-12n-88)+3n^2-12n-40)P_{n}''   \notag \\
&\quad  +40c(c^2-1)P_{n}''' \notag \\
&\quad  +4 ( c^2-1)^2 P_{n}^{(iv)} \notag
\end{align}

Now we multiply this by $4$ and add it to $(n-2)^2$ times \eqnref{elliptic1eqn6} to get the $4$th order linear differential equation satisfied by the polynomials $P_n$. We proved
%
 
 \begin{thm}
 The polynomials $P_n=P_{-4,n}$ satisfy the following differential equation:
 \begin{align}\label{4thorder1}
&16 ( c^2-1)^2   P_{n}^{(iv)}+160 c  (c^2-1 )  P_{n}''' -8  (c^2  (n^2-4 n-46 )-n^2+4 n+22 )P_{n}''  \\
\quad -24 c ( n^2 - 4 n-6)P_{n}'  +(n-4)^2 n^2P_{n}=0.
\end{align}
\end{thm}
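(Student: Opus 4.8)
The plan is to exploit the representation of the generating function $P_{-4}(c,z)$ as a $z$-series whose coefficients are built from the Gegenbauer polynomials $Q_n^{(3/2)}(c)$, combined with the fact that each $Q_n^{(3/2)}$ satisfies the second order equation \eqnref{2ndODE}. First I would isolate the operator $L=(1-c^2)\frac{d^2}{dc^2}-4c\frac{d}{dc}$, which is precisely the non-spectral part of the Gegenbauer operator, so that $LQ_n^{(3/2)}=-n(n+3)Q_n^{(3/2)}$. Applying $L$ in the variable $c$ to both sides of the identity \eqnref{2nODE.1}, and using the ladder relation \eqnref{wikiref} to rewrite every occurrence of $cQ_n^{(3/2)}$ and of $(1-c^2)(Q_n^{(3/2)})'$ in terms of $Q_{n\pm1}^{(3/2)}$, the right hand side should collapse into closed form: a rational function of $z$ times $P_{-4}(c,z)$, plus elementary integrals that reassemble into $P_{-4}$ again. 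The left hand side, being a $z$-dependent prefactor times $P_{-4}(c,z)$, produces under $L$ only $P_{-4}$, $\partial_c P_{-4}$, and $\partial_c^2 P_{-4}$. Equating the two sides and reading off the coefficient of $z^n$ yields a single master relation that links $P_n,P_{n-2},P_{n-4},P_{n-6},P_{n-8}$ together with their first and second derivatives in $c$.

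The second stage is a systematic descent that removes the high-index terms. I would differentiate the recursion \eqnref{recursionelliptic1} once and twice with respect to $c$ to obtain companion relations among the $P'_k$ and the $P''_k$, and then, choosing at each step a multiplier polynomial in $c$ and $n$, add suitably index-shifted copies of the recursion and its two derivatives to the master relation so as to cancel first the $P_{n-8}$ terms, then the $P_{n-6}$ terms, then the $P_{n-4}$ terms. Each cancellation consumes one derivative level (value, first, or second derivative) of the recursion at the relevant shift, and the outcome is a relation carrying only the indices $n$ and $n-2$.

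The third and most delicate stage eliminates the index $n-2$ entirely. The idea is to manufacture a second, independent relation between the two surviving indices by substituting $n\mapsto n-2$ in one of the reduced equations, and then to play that relation off against the two-index relation from stage two. One multiplies repeatedly by carefully chosen factors, typically powers of $c$ and of $(c^2-1)$ together with $n$-dependent polynomials, in order to lower the $c$-degree of the coefficient standing in front of $P''_{n-2}$, then of $P'_{n-2}$, and finally of $P_{n-2}$, peeling these off one at a time. Differentiating the partially reduced relations with respect to $c$ is essential at two points, since it promotes a $P_{n-2}$ or a $P'_{n-2}$ term to the derivative level needed for the next cancellation while simultaneously raising the $c$-order on the $P_n$ side; this is exactly the mechanism that forces the final equation to be of fourth order. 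A last differentiation, combined with an $(n-2)^2$-weighted copy of the first-order reduced relation, clears the remaining $P'_{n-2}$ and produces \eqnref{4thorder1}.

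The main obstacle is organizational rather than conceptual: the elimination is long, and every multiplier must be chosen so that the spurious factors $c$, $(c^2-1)$, and the linear-in-$n$ quantities that accumulate on both sides divide out cleanly, leaving a genuine fourth order operator rather than one of spuriously higher order. One must also track the excluded values of $n$ (the factors such as $n+1$, $n-7$, $n-9$, $n-11$ that one divides through by) and then verify that the surviving coefficients indeed combine into the stated $16(c^2-1)^2$, $160c(c^2-1)$, $-8(c^2(n^2-4n-46)-n^2+4n+22)$, $-24c(n^2-4n-6)$, and $(n-4)^2n^2$. Once the descent is organized so that each step annihilates exactly one index or one derivative level, the computation goes through by hand.
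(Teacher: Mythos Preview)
Your proposal is correct and follows essentially the same route as the paper: apply $L=(1-c^2)\partial_c^2-4c\partial_c$ to both sides of \eqnref{2nODE.1}, obtain the five-index master relation \eqnref{longequationelliptic1}, use the recursion \eqnref{recursionelliptic1} and its first two $c$-derivatives to strip off indices $n-8$, $n-6$, $n-4$, then shift $n\mapsto n-2$ to get a second two-index relation and play the pair against each other (lowering the $c$-degree in front of $P''_{n-2}$, differentiating in $c$ at exactly two places) until only index $n$ survives. Your list of excluded values $n\in\{-1,7,9,11\}$ and the organizational warning about spurious common factors are also precisely what the paper encounters.
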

%
%

\subsection{Elliptic Case 2}
From now on we are going to reindex the polynomials $P_{-2,n}$:
\begin{align*}
P_{-2}(c,z)&=z\sqrt{1-2 c z^2+z^4}\int \frac{1}{ (z^4-2c z^2+1)^{3/2}}=\sum_{n=0}^\infty P_{-2,n}(c)z^n.\end{align*}
This means that now $P_{-2,2}(c)=1$, $P_{-2,3}(c)=P_{-2,1}(c)=
P_{-2,0}(c)=0$ and
$P_{-2,n}(c)$ satisfy the following recursion:

\begin{equation}
(6+2k)P_{k+4}(c)
=4k cP_{k+2}(c)-2(k-3)P_{k}(c)
\end{equation}

The first few nonzero polynomials in $c$ are
$$
P_{-2,2}(c)=1,\quad P_{-2,6}=1/5, \quad P_{-2,8}=8c/35,$$
$$
 P_{-2,10},=(-7+32c^2)/105,\quad P_{-2,12}=8c(-29+64c^2)/1155.
$$
so that

\begin{align*}
P_{-2}(c,z)&=z^2+\frac{1}{5}z^6+\frac{8c}{35}z^8+\frac{32c^2-7}{105}z^{10}+\frac{8c(64c^2-29)}{1155}z^{12}+O(z^{14}).
\end{align*}
After a very similar lengthy analysis as in the previous section we arrive at the following result:

\begin{thm}
The polynomials $P_n=P_{-2,n}$
satisfy the following fourth order linear differential equation 
\begin{align}\label{4thorder2}
  &16 ( c^2-1)^2   P_{n}^{(iv)}+160 c  (c^2-1 )  P_{n}'''    -8  (c^2  (n^2-4 n-42 )-n^2+4 n+18 )P_{n}''    \\
&\quad -24 c   (n^2-4 n-2 )P_{n}'  +(n-6) (n-2)^2   (n+2)P_{n}=0 .   \notag
\end{align}
\end{thm}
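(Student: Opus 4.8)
The plan is to mimic exactly the derivation carried out for Elliptic type 1, since the two families differ only in the integrand of their generating functions and in the resulting Gegenbauer expansion. First I would record the generating function in Gegenbauer form. Using the classical generating identity $(1-2cz^2+z^4)^{-3/2}=\sum_{n\geq 0}Q_n^{(3/2)}(c)z^{2n}$ and integrating term by term, I would write
\begin{equation*}
z^{-1}(1-2cz^2+z^4)^{-1/2}P_{-2}(c,z)=\sum_{n=0}^\infty\frac{Q_n^{(3/2)}(c)}{2n+1}z^{2n+1},
\end{equation*}
which is the analogue of \eqnref{2nODE.1} but now with a \emph{single} sum, reflecting the simpler integrand $1/(z^4-2cz^2+1)^{3/2}$ of Case 2.

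Next I would apply the operator $L=(1-c^2)\frac{d^2}{dc^2}-4c\frac{d}{dc}$ to both sides. On the right, the Gegenbauer ODE \eqnref{2ndODE} gives $L(Q_n^{(3/2)})=-n(n+3)Q_n^{(3/2)}$, and since $n(n+3)=\frac14\big((2n+1)^2+4(2n+1)-5\big)$ I can rewrite the weighted sum $\sum n(n+3)Q_n^{(3/2)}z^{2n+1}/(2n+1)$ as the Euler operator $\frac14 z^2\frac{d^2}{dz^2}+\frac54 z\frac{d}{dz}-\frac54$ applied to the single sum, hence as an explicit combination of $P_{-2}(c,z)$, an elliptic integral, and $z$-derivatives of $(z^4-2cz^2+1)^{-3/2}$. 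On the left, the chain rule in $c$ produces $P_{-2}$, $\frac{d}{dc}P_{-2}$ and $\frac{d^2}{dc^2}P_{-2}$ with rational coefficients in $c,z$. Equating, clearing denominators by $z(z^4-2cz^2+1)^{5/2}$, and reading off the coefficient of $z^n$ yields the Case 2 analogue of \eqnref{longequationelliptic1}: a relation among $P_n,P_{n-2},P_{n-4},P_{n-6},P_{n-8}$ and their first two $c$-derivatives.

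From here the argument is purely algebraic. Using the shared recursion \eqnref{recursionelliptic1} together with its first and second $c$-derivatives \eqnref{1storderelliptic1} and \eqnref{2ndorderelliptic1}, I would eliminate the indices in the order $n-8$, then $n-6$, then $n-4$, exactly as in the passage culminating in \eqnref{elliptic1eqn1}, arriving at a relation containing only the indices $n$ and $n-2$. To remove the index $n-2$ I would use the trick of the previous subsection: shift $n\mapsto n-2$ to obtain a second two-index relation (the analogue of \eqnref{elliptic1eqn2}), then form $c$-polynomial combinations that successively lower the $c$-degree of the coefficient of $P_{n-2}''$ until $P_{n-2}''$, then $P_{n-2}'$, then $P_{n-2}$ drop out; differentiating the intermediate identities in $c$ is what raises the order and finally produces a self-contained fourth-order equation in $P_n$. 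The coefficients $(n-6)(n-2)^2(n+2)$, $-24c(n^2-4n-2)$, and the rest should then emerge, the differences from the type 1 coefficients tracing back to the single shifted Euler operator above.

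The main obstacle is bookkeeping rather than conceptual: each elimination multiplies the surviving relations by polynomials in $c$ and $n$ of growing degree, so the expressions balloon and are easy to mishandle; one also has to track the factors $(c^2-1)$ and the linear factors in $n$ that get divided out, which forces temporary exclusions $c\neq\pm1$ and finitely many small values of $n$ (the analogues of $n\neq 7,9,11$). I would close those gaps at the end by verifying \eqnref{4thorder2} directly on the explicitly listed low-degree polynomials $P_{-2,2},P_{-2,6},\dots,P_{-2,12}$, which also serves as an independent check of the whole computation. As the paper indicates, the full details are recorded only for type 1 and the type 2 case is omitted precisely because it is entirely parallel.
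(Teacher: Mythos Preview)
Your proposal is correct and follows exactly the route the paper intends: the paper explicitly omits the proof, stating only that it proceeds by ``a very similar lengthy analysis as in the previous section,'' and your sketch faithfully reproduces that analysis with the appropriate simplification that the Gegenbauer expansion of $z^{-1}(1-2cz^2+z^4)^{-1/2}P_{-2}(c,z)$ is a single sum rather than two. The Euler-operator identity, the elimination cascade via \eqnref{recursionelliptic1}--\eqnref{2ndorderelliptic1}, and the closing verification for small $n$ are all the right ingredients.
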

%

\section{Associated ultraspherical polynomials}

 After shifting the indices back by $4$ we obtain  that both families of the polynomials $P_{-4,n}(c)$ and $P_{-2,n}(c)$
 satisfy the recurrence relation
 
 \begin{equation}
  2k cP_{k-2}(c)= (3+k)P_{k}(c)     +(k-3)P_{k-4}(c).
\end{equation}

Note that all odd polynomials are zero. Set $k=2(n+1)$ and $q_{s}:=P_{2s}$. Then we have
\begin{equation}\label{qrecursion}
4(n+1)cq_n=(2n+5)q_{n+1}+ (2n-1)q_{n-1},
\end{equation}
 where $q_s=q_s(c)$.  

 For $q_s(c)=P_{-4,2s}(c)$ one has
\vskip 5pt
\noindent
\fbox{\begin{minipage}{5in}
\begin{gather}\label{q-4}
q_{-2}=P_{-4,-4}=1,\quad q_{-1}=P_{-4,-2}=0,\quad q_0=P_{-4,0}=1,  \\ q_1=P_{-4,2}=4c/5,\quad
 q_2=P_{-4,4}=\frac{32c^2-5}{35},\quad
 q_3=P_{-4,6}=\frac{16}{105} c \left(8 c^2-3\right).\notag
\end{gather}
\end{minipage}}
 
 We will show that they are  special cases of the associated ultraspherical polynomials.

For $c$ and $\nu$ two complex constants, if one lets $C_n^{(\nu)}(x;c)$ denote the {\it associated ultra-spherical polynomials} with initial conditions $C_{-1}^{(\nu)}(x;c)=0$, $C_0^{(\nu)}(x;c)=1$ then they are defined so as to satisfy the difference equation 
\begin{equation}
2x(n+\nu+c)C_{n}^{(\nu)}(x;c)=(n+c+1)C_{n+1}^{(\nu)}(x;c)+(2\nu+n+c-1)C_{n-1}^{(\nu)}(x;c).
\end{equation} 
Then setting $c=3/2$ and $\nu=-1/2$ this equation is the same recurrence relation as \eqnref{qrecursion}. 
Hence, we can conclude that polynomials $P_{-4,n}$ is a special case of associated ultraspherical polynomials.

For $\alpha,\beta,c\in\mathbb C$ constants, the {\it associated Jacobi polynomials} $P_n^{(\alpha,\beta)}(x;c)$ are defined so as to satisfy the recurrence relation
\begin{align}\label{assocJacobipolynomials}
2&(n+c+1)(n+c+\gamma)(2n+2c+\gamma-1)p_{n+1} \\
&=(2n+2c+\gamma)[(2n+2c+\gamma-1)(2n+2c+\gamma+1)x\notag\\
&\hskip 100pt +(\gamma-1)(\gamma-2\beta-1)]p_n\notag \\
&\quad -2(n+c+\gamma-\beta-1)(n+c+\beta)(2n+2c+\gamma+1)p_{n-1}\notag
\end{align}
where $n\in\mathbb N$, $\gamma=\alpha+\beta+1$, $P_{-1}^{(\alpha,\beta)}(x,c)=0$; $P_{0}^{(\alpha,\beta)}(x,c)=1$.

One can check that the associated ultraspherical polynomials are related to the associated Jacobi polynomials through the formula  
\begin{align*}
P_n^{(\nu-1/2,\nu-1/2)}(x;\beta)=\frac{\left(\nu+c+\frac{1}{2}\right)_n}{(2\nu+c)_n}C_n^{(\nu)}(x;\beta).
\end{align*}

%
 The weight measure for the associated ultraspherical polynomials is known (\cite{MR663314}) and is given by
 \begin{equation}
 \rho(\cos t)=\frac{(2\sin t)^{2\beta-1}[\Gamma(\lambda+\gamma)]^2}{2\pi \Gamma(2\beta+\gamma)\Gamma(\gamma+1)}|{_2F_1}(1-\beta,\gamma;\gamma+\beta;e^{2\imath t})|^{-2}
 \end{equation}
 for $0\leq t\leq \pi$ and where $_2F_1(a,b;c;z)$ is the hypergeometric function
 \begin{align*}
 _2F_1(a,b;c;z)=\sum_{n=0}^\infty\frac{(a)_n(b)_n}{(c)_n}z^n.
 \end{align*}

\begin{cor}\label{cor-orthog-non-clas}
Polynomials $P_{-4,n}$ are non-classical orthogonal polynomials.

\end{cor}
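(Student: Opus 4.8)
The plan is to treat the two claims of this corollary—orthogonality and non-classicality—in turn, both starting from the three-term recurrence \eqnref{qrecursion}.

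For orthogonality I would argue in either of two ways. Conceptually, the recurrence \eqnref{qrecursion} is, as just observed, exactly the one defining the associated ultraspherical polynomials $C_s^{(-1/2)}(c;3/2)$, so the family $\{P_{-4,n}\}$ inherits their orthogonality with respect to the explicit weight $\rho$ quoted above from \cite{MR663314}. To keep the argument self-contained, I would instead invoke Favard's theorem directly: passing to the monic normalization $\tilde q_s$, the recurrence \eqnref{qrecursion} becomes $\tilde q_{s+1}=c\,\tilde q_s-\lambda_s\,\tilde q_{s-1}$ with vanishing diagonal term (reflecting the evenness of the weight), and a short computation with the leading coefficients gives $\lambda_s=\frac{(2s-1)(2s+3)}{16\,s(s+1)}=\frac{1}{4}-\frac{3}{16\,s(s+1)}$. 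Since $\lambda_s>0$ for every $s\geq 1$, Favard's theorem yields a positive Borel measure on the real line for which the $\tilde q_s$, hence the $P_{-4,n}$, are orthogonal.

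For non-classicality the governing principle is Bochner's theorem: a positive-definite orthogonal polynomial sequence is classical (an affine image of Hermite, Laguerre or Jacobi) precisely when it satisfies a second-order equation $\sigma(c)y''+\tau(c)y'+\mu_s y=0$ with $\sigma,\tau$ fixed of degrees $\leq 2,\leq 1$. Because our measure is even and compactly supported (the weight $\rho(\cos t)$ lives on $[-1,1]$), the only classical candidate is an affine ultraspherical (Gegenbauer) family, so it suffices to exclude this single possibility. I would do so by comparing monic recurrence coefficients: for a Gegenbauer family of parameter $\mu$ the coefficient equals $\frac{s(s+2\mu-1)}{4(s+\mu)(s+\mu-1)}$, a rational function of $s$ with a zero at $s=0$, whereas our $\lambda_s=\frac{(2s-1)(2s+3)}{16\,s(s+1)}$ has instead a pole at $s=0$ and numerator not vanishing there. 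No rescaling $c\mapsto \alpha c$ and no choice of $\mu$ can reconcile these zero/pole patterns, so $\{P_{-4,n}\}$ is not affinely Gegenbauer and is therefore non-classical; consistently, it satisfies the genuine fourth-order equation \eqnref{4thorder1} but no second-order Bochner equation.

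The step I expect to be the main obstacle is precisely the non-classical claim: the existence of the fourth-order equation \eqnref{4thorder1} does not by itself rule out a second-order one, so the crux is to exclude all classical families rigorously rather than merely to exhibit a higher-order equation. The recurrence-coefficient comparison above is the clean way to do this; the only care required is in the reduction to the single Gegenbauer candidate, which uses that an even, compactly supported, positive-definite classical sequence must be affinely ultraspherical. An alternative that avoids even this reduction is to note that the orthogonalizing weight $\rho$ carries the nontrivial factor $|{}_2F_1(1-\beta,\gamma;\gamma+\beta;e^{2\imath t})|^{-2}$, which is not a product of powers of the linear factors $1\pm c$ and hence is not (an affine image of) a classical Jacobi weight.
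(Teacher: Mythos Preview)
Your argument is correct on both counts. For orthogonality you do exactly what the paper does: identify $q_s=P_{-4,2s}$ with the associated ultraspherical family $C_s^{(-1/2)}(\,\cdot\,;3/2)$ and invoke the known weight from \cite{MR663314}; your Favard computation $\lambda_s=\frac{(2s-1)(2s+3)}{16s(s+1)}>0$ is a clean self-contained alternative and matches the style the paper uses later for $P_{-2,n}$.

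For non-classicality your route is genuinely different from the paper's. The paper (in \thmref{nonclassical}) simply posits a second-order operator $D=(ax^2+bx+c)\partial_x^2+(ex+f)\partial_x$ in $\mathcal D(w)$ and feeds in $q_0,\dots,q_3$; a handful of linear equations force $a=b=c=e=f=0$. You instead invoke Bochner's classification, use symmetry and compact support of the orthogonality measure to reduce to a single affine Gegenbauer candidate, and then kill that candidate by comparing the monic recurrence coefficients as rational functions of the index (your pole at $s=0$ versus the Gegenbauer zero at $s=0$). Both are valid. The paper's method is more elementary---no classification theorem, just four polynomials and linear algebra---and is indifferent to whether one already knows the support or symmetry of the weight. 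Your method is more structural and explains \emph{why} the family is non-classical (the recurrence coefficients have the wrong zero/pole pattern to come from any Gegenbauer parameter), at the cost of importing Bochner and needing the reduction step you yourself flag. Your remark that the hypergeometric factor $|{}_2F_1|^{-2}$ in $\rho$ prevents it from being a Jacobi weight is a nice heuristic, but the recurrence comparison is the rigorous piece.
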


 Finally we should point out that Wimp (see  \cite{MR2191786}, equations 5.7.10-5.7.12  and formula (48) in \cite{MR915027}) using MACSYMA and ideas from \cite{MR1349110},  deduced that the associated Jacobi Polynomials satisfy the fourth order differential equation 
 \begin{equation}
 A_0(x)y^{(vi)}+A_1(x)y'''+A_2(x)y''+A_3(x)y'+A_4(x)y=0
 \end{equation} 
 where
 \begin{align*}
 A_0(x)&=(1-x^2)^2 \\
 A_1(x)&=10x(x^2-1) \\
 A_2(x)&=-(1-x)^2(2K+2C+\gamma^2-25)+2(1-x)(2K+2C+2\alpha \gamma) \\
 	&\qquad +2(\alpha+1)-26  \\
A_3(x)&=3(1-x)(2K+2C+\gamma^2-5)-6(K+C+\alpha \gamma+\beta-2) \\
A_4(x)&=n(n+2)(n+\gamma+2c)(n+\gamma+2c-2),
 \end{align*}
 where 
 $$
 K=(n+c)(n+\gamma+c),\qquad C=(c-1)(c+\alpha+\beta).
 $$
 
 In the setting of the polynomials $q_{n}(x)=P_{-4,2n+4}(x)=C^{(-1/2)}_{n}(x,3/2)$ for $n\geq -1$ , the fourth order differential equation we arrived at takes the form 
\begin{align} 
  &( x^2-1)^2   q_{n}^{(iv)}+10 x  (x^2-1 )  q_{n}''' -  (x^2  (2n^2+4n-23 )-2n^2-4n+11 )q_{n}''  \notag  \\
&\quad -3\cdot  x (2n^2+4n-3)q_{n}'  +n^2 (n+2)^2q_{n}  =0 \notag \\ \notag
\end{align}
For $n=2$ we get $\displaystyle{q_2(x)=\frac{32x^2-5}{35}}$, and plugging this into the equation above we have
\begin{align} 
  &-  (-7x^2 -5 )64  -3\cdot  x \cdot 13\cdot 64x  +4 \cdot 16(32x^2-5)  =0 \notag \\ \notag
\end{align}

In Ismail's notation we have 
$c=3/2$, $\nu=-1/2$, $\alpha=\beta=\gamma=-1$, 
 $$
 K=(n+3/2)(n+1/2),\quad C=-1/4 
 $$
 and 
\begin{align*}
 A_2(x)
	&=-(2 n^2+4 n-23) x^2- 52 x+2 n^2+4 n +3  \\
A_3(x)
&=-3(2n^2+4n-3)x \\
A_4(x)&=n^2(n+2)^2.
 \end{align*}
When $n=2$ we see that $A_2(x)$ is slightly off from our corresponding coefficient and
 $\displaystyle{q_2(x)=\frac{32x^2-5}{35}}$ does not satisfy the above equation. 
So there seems to be a typo in the formulation of $A_2(x)$ given in \cite{MR2191786} and the families of fourth order differential equations (in the variable $x$) given in the book by Ismail for the associated ultraspherical polynomials are different from the families of fourth order linear differential equations (in the variable $c$) derived in our work above, i.e. \eqnref{4thorder1} and \eqnref{4thorder2}.

\section{Orthogonality of $P_{-2,n}$}

Polynomials $P_{-2,n}$ satisfy the same recurrence relation as polynomials $P_{-4,n}$ but have different initial conditions.
Hence, we do know immediately whether they are associated ultraspherical polynomials. Here we will give an independent proof of the orthogonality of these polynomials.

  For $q_s(c)=P_{-2,2s}(c)$ we have
\begin{gather*}
q_{-1}=P_{-2,-2}=1,\quad q_{0}=P_{-2,0}=0,\quad q_1=P_{-2,2}=1/5,  \\ q_2=P_{-2,4}=8c/35,\quad
 q_3=P_{-2,6}=\frac{32c^2-7}{105}.
\end{gather*}

If $q_s(c)=P_{-2,2s}(c)$ we want polynomials with index $n$ giving the degree of the polynomial, then we will set
$$
\bar q_n:=q_{n+1},\quad n\geq -1
$$
while we ignore the ``first''  two polynomials; $q_{-1}=1$ and $q_0=0$.
Then
\vskip 5pt \noindent
\fbox{\begin{minipage}{5in}\begin{equation}\label{q-2}
\bar q_0=1/5,\quad \bar q_1=8c/35,\quad \bar q_2=\frac{32c^2-7}{105},\quad \bar q_3=\frac{8c(64c^2-29)}{1155}
\end{equation}\end{minipage}}
and \eqnref{qrecursion}, becomes
\begin{equation}
4(n+2)cq_{n+1}=(2n+7)q_{n+2}+ (2n+1)q_{n},
\end{equation}
or
\vskip 5pt
\noindent
\fbox{\begin{minipage}{5in}
\begin{equation}\label{qbarrecursion}
4(n+2)c\bar q_{n}=(2n+7)\bar q_{n+1}+ (2n+1)\bar q_{n-1},
\end{equation}\end{minipage}}
where the polynomials $\bar q_n$ are of degree $n$ in $c$.
\color{black}

We recall the following result by Favard which can be found in \cite{favard} or in Theorem 4.4 of \cite{MR0481884}:
\begin{thm} Let $\{\phi_n, n\geq0\}$ be a sequence of polynomials (monic) where $p_n$ is a polynomial of degree $n$, satisfying the following recursion
\begin{equation}\label{Favard2}
\phi_n=(x-\mu_n)\phi_{n-1}-\lambda_n\phi_{n-2},\qquad n=1,2,3,\dots
\end{equation}
for some complex numbers $\mu_n, \lambda_n$, $n=1,2,3,\dots$ and $\phi_{-1}=0$, $\phi_0=1$. Then $\{\phi_n, n\geq 0\}$ is a sequence of orthogonal polynomials with respect to a (unique) weight measure if and only if $\mu_n\in\mathbb R$ and $\lambda_{n+1}>0$ for all $n\ge1$.
\end{thm}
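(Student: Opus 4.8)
The plan is to route both implications through a single auxiliary object, the \emph{moment functional} $\mathcal{L}$ attached to the recurrence. Since each $\phi_n$ is monic of degree $n$, the family $\{\phi_n\}_{n\ge0}$ is a vector-space basis of $\mathbb C[x]$, so I may \emph{define} a linear functional $\mathcal{L}\colon\mathbb C[x]\to\mathbb C$ by $\mathcal{L}(\phi_0)=1$ and $\mathcal{L}(\phi_n)=0$ for $n\ge1$, extended by linearity; equivalently this prescribes the moments $s_k=\mathcal{L}(x^k)$ recursively, and these are real because the $\phi_n$ have real coefficients. The theorem then follows once I show that orthogonality of $\{\phi_n\}$ with respect to a positive measure is equivalent to positive-definiteness of $\mathcal{L}$, and translate the latter into the stated sign conditions on $\mu_n$ and $\lambda_{n+1}$.

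For the \emph{necessity} direction I would assume a positive weight measure $d\psi$ with $\langle p,q\rangle=\int pq\,d\psi$ and $\langle\phi_m,\phi_n\rangle=h_n\delta_{mn}$, $h_n>0$. Pairing \eqnref{Favard2} with $\phi_{n-1}$ and with $\phi_{n-2}$ and using orthogonality yields
\begin{equation*}
\mu_n=\frac{\langle x\phi_{n-1},\phi_{n-1}\rangle}{h_{n-1}},\qquad \lambda_n=\frac{\langle x\phi_{n-1},\phi_{n-2}\rangle}{h_{n-2}}=\frac{h_{n-1}}{h_{n-2}},
\end{equation*}
the last equality using $x\phi_{n-2}=\phi_{n-1}+(\text{lower order})$. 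The first quantity is real because $x\phi_{n-1}^2$ is real and $d\psi$ is a positive real measure, and the second is a ratio of positive norms, so $\lambda_{n+1}=h_n/h_{n-1}>0$ for $n\ge1$. (Note $\lambda_1$ never enters, since it multiplies $\phi_{-1}=0$, which explains why the hypothesis only constrains $\lambda_{n+1}$ for $n\ge1$.)

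The substance is the \emph{sufficiency} direction. The crucial combinatorial step is to prove $\mathcal{L}(x^m\phi_n)=0$ whenever $n>m$, and the clean way to do this is induction on the \emph{power} $m$ rather than on the index $n$. Rewriting \eqnref{Favard2} at level $n+1$ as $x^{m-1}\phi_{n+1}=x^m\phi_n-\mu_{n+1}x^{m-1}\phi_n-\lambda_{n+1}x^{m-1}\phi_{n-1}$ and applying $\mathcal{L}$ expresses $\mathcal{L}(x^m\phi_n)$ as a combination of three quantities of the form $\mathcal{L}(x^{m-1}\phi_\bullet)$, each carrying an index strictly larger than $m-1$ when $n>m$; the inductive hypothesis kills all three, the base case $m=0$ being just $\mathcal{L}(\phi_n)=0$ for $n\ge1$. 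This gives orthogonality $\mathcal{L}(\phi_j\phi_n)=0$ for $j<n$, and the same identity at $m=n$ produces the norm recursion $h_n=\lambda_{n+1}h_{n-1}$, whence $h_0=1$ and $h_n=\prod_{j=2}^{n+1}\lambda_j$. Under the hypotheses these are all positive, so $\mathcal{L}(p^2)=\sum_k a_k^2 h_k>0$ for every nonzero real $p=\sum_k a_k\phi_k$; that is, $\mathcal{L}$ is positive-definite.

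The main obstacle is the final passage from a positive-definite functional to an actual \emph{measure}. Here I would invoke the solution of the Hamburger moment problem: a positive-definite moment functional on $\mathbb R[x]$ is represented by a positive Borel measure $d\psi$ on $\mathbb R$, after which $\int\phi_m\phi_n\,d\psi=\mathcal{L}(\phi_m\phi_n)=h_n\delta_{mn}$ exhibits $\{\phi_n\}$ as orthogonal polynomials for $d\psi$. Equivalently, one realizes multiplication by $x$ as a symmetric (Jacobi) operator on $\ell^2$ and takes $d\psi$ to be a spectral measure. Uniqueness of $d\psi$ is precisely determinacy of this moment problem, which holds in the cases relevant to this paper (bounded recurrence coefficients, hence compactly supported $d\psi$); I would state uniqueness under that determinacy hypothesis rather than assert it in full generality.
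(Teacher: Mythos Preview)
The paper does not prove this theorem at all: it is quoted verbatim as Favard's classical result, with a citation to \cite{favard} and to Theorem~4.4 of Chihara's book \cite{MR0481884}, and is used only as input for the equivalent reformulation in Theorem~\ref{favard}. So there is no ``paper's own proof'' to compare against.

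That said, your argument is the standard moment-functional proof one finds in Chihara, and it is essentially correct. A few small remarks. First, your parenthetical that the $\phi_n$ have real coefficients is only valid in the sufficiency direction, where you have already assumed $\mu_n\in\mathbb R$ and $\lambda_{n+1}>0$; you should not invoke it before that point. Second, the norm recursion you state, $h_n=\lambda_{n+1}h_{n-1}$, is right, but the cleanest way to obtain it is to pair $x\phi_n$ with $\phi_{n-1}$ using the recursion at level $n+1$, and separately pair $\phi_n$ with $x\phi_{n-1}$ using the recursion at level $n$; equating the two gives $h_n=\lambda_{n+1}h_{n-1}$ directly. Third, your caution about uniqueness is well placed: the Hamburger moment problem need not be determinate, so the parenthetical ``(unique)'' in the statement should be read under a determinacy hypothesis (e.g.\ bounded Jacobi coefficients, which holds in the application at hand). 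With those caveats your write-up would serve as a self-contained proof of the quoted result.
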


We have the following equivalent version of the theorem above.

\begin{thm}\label{favard}
 Let $\{p_n, n\geq 0\}$ be a sequence  of polynomials where $p_n$ is a polynomial of degree $n$, satisfying the following recursion
\begin{equation}\label{Favard1}
xp_n=a_{n+1}p_{n+1}+b_np_n+c_{n-1}p_{n-1},\qquad n=0,1,2,\dots
\end{equation}
for some complex numbers $a_{n+1}, b_n, c_{n-1}$, $n=0,1,2,\dots$and $p_{-1}=0$. Then $\{p_n, n\geq 0\}$ is a sequence of  orthonormal polynomials with respect to some (unique) weight measure if and only if $b_n\in\mathbb R$ and $c_{n}=\bar{a}_{n+1}\ne0$ for all $n\ge0$.
\end{thm}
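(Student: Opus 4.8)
The plan is to derive this orthonormal formulation from the monic version of Favard's theorem recalled just above, by changing normalization between the two presentations of the polynomials. The dictionary I will set up sends the coefficients $a_{n+1},b_n,c_{n-1}$ of \eqnref{Favard1} to the monic recurrence coefficients $\mu_n,\lambda_n$ of \eqnref{Favard2}.

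First I would extract the role of the hypothesis $a_{n+1}\neq 0$. Writing $k_n$ for the leading coefficient of $p_n$ and comparing the coefficients of $x^{n+1}$ on the two sides of \eqnref{Favard1} gives $k_n=a_{n+1}k_{n+1}$; thus $a_{n+1}\neq 0$ is precisely what forces $\deg p_n=n$ and lets me define the monic polynomials $\phi_n:=p_n/k_n$, with $\phi_0=1$, $\phi_{-1}=0$. Substituting $p_n=k_n\phi_n$ into \eqnref{Favard1}, dividing by $k_n$, and using $k_{n-1}/k_n=a_n$ turns the leading coefficient into $1$ and produces
\[
\phi_{n+1}=(x-b_n)\phi_n-a_nc_{n-1}\phi_{n-1}.
\]
Comparison with \eqnref{Favard2} identifies $\mu_{n+1}=b_n$ and $\lambda_{n+1}=a_nc_{n-1}$. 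Under the stated hypotheses $b_n\in\mathbb R$ and $c_{n-1}=\bar a_n$ these read $\mu_{n+1}\in\mathbb R$ and $\lambda_{n+1}=|a_n|^2>0$, so the monic recurrence has real coefficients; since its initial data are real, every $\phi_n$ has real coefficients.

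For the converse I would invoke the monic theorem to get a measure $\mu$, unique up to positive scaling, with $\int\phi_n\phi_m\,d\mu=h_n\delta_{nm}$ and $h_n>0$. Pairing the monic recurrence with $\phi_{n-1}$ gives the standard norm identity $\lambda_{n+1}=h_n/h_{n-1}$, hence $h_n=|a_n|^2h_{n-1}$, while $k_n=k_{n-1}/a_n$ gives $|k_n|^2=|k_{n-1}|^2/|a_n|^2$; therefore $|k_n|^2h_n$ is independent of $n$. Fixing the scale of $\mu$ so that this constant equals $1$ (equivalently, so that the total mass is $1/|k_0|^2$), and using that the $\phi_m$ are real, I obtain
\[
\int p_n\overline{p_m}\,d\mu=k_n\overline{k_m}\int\phi_n\phi_m\,d\mu=|k_n|^2h_n\,\delta_{nm}=\delta_{nm},
\]
which is orthonormality with respect to the unique so-normalized weight. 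The forward implication is the easier half: orthonormality gives $b_n=\int x|p_n|^2\,d\mu\in\mathbb R$, the leading-coefficient comparison again yields $a_{n+1}=k_n/k_{n+1}\neq 0$, and reality of $x$ on the support gives $c_{n-1}=\int xp_n\overline{p_{n-1}}\,d\mu=\overline{\int xp_{n-1}\overline{p_n}\,d\mu}=\bar a_n$.

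The step I expect to demand the most care is the bookkeeping of normalizations in the converse: checking that $|k_n|^2h_n$ is genuinely constant in $n$ and that the one-parameter scaling freedom in the measure supplied by the monic theorem is exactly what is needed to make it equal to $1$, so that the resulting weight is still unique. A companion point to keep straight is that the $\phi_n$ come out with real coefficients, so that $\overline{\phi_m}=\phi_m$ inside the pairing; this is what makes the complex conjugation in the condition $c_n=\bar a_{n+1}$ align with the Hermitian inner product defined by the measure.
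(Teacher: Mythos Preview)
Your proposal is correct and follows essentially the same route as the paper: both arguments pass between the orthonormal recursion \eqnref{Favard1} and the monic recursion \eqnref{Favard2} via the leading-coefficient rescaling, arriving at the dictionary $\mu_{n+1}=b_n$, $\lambda_{n+1}=a_nc_{n-1}=|a_n|^2$. Your write-up is in fact more careful than the paper's on the point you flagged---tracking $|k_n|^2h_n$ and using the scaling freedom in the measure to force orthonormality rather than mere orthogonality---which the paper leaves implicit.
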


\begin{proof} The equivalence of this statement with the original result of Favard is established in the following way: 
Let $k_n=||\phi_n||$  
and put $p_n=k_n^{-1}\phi_n$. Then from \eqref{Favard2} we get 
$$xp_n=k_n^{-1}k_{n+1}p_{n+1}+\mu_{n+1}p_n+k_n^{-1}\lambda_{n+1}k_{n-1}p_{n-1}.$$
By putting $a_{n+1}=k_n^{-1}k_{n+1}$, $b_n=\mu_{n+1}$ and $c_{n-1}=k_n^{-1}\lambda_{n+1}k_{n-1}$ we obtain \eqref{Favard1} with $b_n\in\mathbb R$ and $c_{n}=\bar{a}_{n+1}\ne0$ for all $n\ge0$, because 
$$c_n=(xp_{n+1},p_n)=(p_{n+1},xp_n)=\bar a_{n+1}.$$

Conversely, let $d_n$ be the leading coefficient of $p_n$ and put $\phi_n=d_n^{-1}p_n$.  Then from \eqref{Favard1} we get
 $$x\phi_{n-1}=d_{n-1}^{-1}a_{n}d_n\phi_n+b_{n-1}\phi_{n-1}+d_{n-1}^{-1}c_{n-2}d_{n-2}\phi_{n-2}.$$
 Since the $\phi_n$'s are monic we have $d_{n-1}^{-1}a_nd_n=1$. Therefore 
we obtain \eqref{Favard2} with $\mu_n=b_{n-1}\in\mathbb R$ and  $\lambda_{n+1}=d_n^{-1}c_{n-1}d_{n-1}=a_{n}c_{n-1}=a_{n}\bar a_{n}>0$ for all $n\ge1$. 
\end{proof}

We apply Theorem~\ref{favard} to our setting.
 If we set  $\displaystyle{a_{n+1}=\frac{2n+7}{4(n+2)}}$, $b_n=0$, $\displaystyle{c_{n-1}=\frac{2n+1}{4(n+2)}}$, then the hypothesis of Favard's Theorem is not satisfied.  But with a modification to our recursion formula we show that the $\bar q_n$ are orthogonal. 

\begin{thm}\label{thm-orth}
 The polynomials $P_{-2,n}(c)$ are orthogonal with respect to some weight function.
\end{thm}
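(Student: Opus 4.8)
The plan is to repair the failure of the symmetry condition $c_n=\bar a_{n+1}$ in \thmref{favard} by rescaling the polynomials. Reading off $a_{n+1}=\frac{2n+7}{4(n+2)}$, $b_n=0$, $c_{n-1}=\frac{2n+1}{4(n+2)}$ from \eqnref{qbarrecursion}, the diagonal terms $b_n=0$ are already real, so the only obstruction is that $c_n=\frac{2n+3}{4(n+3)}$ differs from $a_{n+1}=\frac{2n+7}{4(n+2)}$. I would introduce new polynomials $p_n=\kappa_n\bar q_n$ for positive real constants $\kappa_n$ to be chosen, observe that $\deg p_n=n$ and $p_{-1}=0$ are unaffected by a nonzero rescaling, and then compute the transformed recurrence.

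Substituting $\bar q_n=\kappa_n^{-1}p_n$ into \eqnref{qbarrecursion} and clearing denominators produces a three-term recurrence $c\,p_n=\tilde a_{n+1}p_{n+1}+\tilde c_{n-1}p_{n-1}$ with $\tilde a_{n+1}=a_{n+1}\kappa_n/\kappa_{n+1}$, $\tilde c_{n-1}=c_{n-1}\kappa_n/\kappa_{n-1}$, and vanishing diagonal. The orthonormality condition $\tilde c_n=\tilde a_{n+1}$ then reduces to $(\kappa_{n+1}/\kappa_n)^2=a_{n+1}/c_n$. Since $a_{n+1}/c_n=\frac{(2n+7)(n+3)}{(2n+3)(n+2)}>0$ for every $n\ge 0$, this ratio is a positive real, so the two-term recursion for $\kappa_n$ admits a positive real solution; telescoping yields the explicit value (up to an overall positive constant) $\kappa_n^2\propto (n+2)(2n+3)(2n+5)$.

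With this choice the transformed recurrence satisfies the hypotheses of \thmref{favard}: the diagonal terms vanish, hence lie in $\mathbb R$, and $\tilde c_n=\tilde a_{n+1}\ne 0$ for all $n\ge 0$. Therefore $\{p_n\}$ is a sequence of orthonormal polynomials with respect to some unique weight measure. Finally, since $p_n=\kappa_n\bar q_n$ with $\kappa_n\ne 0$, the $\bar q_n$ differ from this orthonormal family only by nonzero scalar factors and so are orthogonal with respect to the same measure; unwinding the reindexing $\bar q_n=q_{n+1}=P_{-2,2n+2}$ gives orthogonality of the $P_{-2,n}$, as claimed.

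I expect the main difficulty to be purely computational bookkeeping rather than any conceptual obstacle, since the positivity $a_{n+1}/c_n>0$ that makes the rescaling possible holds over the full range of indices. The step requiring care is solving the recursion for $\kappa_n$ cleanly and verifying that the rescaled coefficients $\tilde a_{n+1}$ and $\tilde c_n$ genuinely coincide. One further point to watch is the boundary behavior at the first few indices, namely the ignored polynomials $q_{-1}=1$, $q_0=0$ together with the normalization $\bar q_0=1/5$, so as to confirm that the shifted sequence truly begins with $p_{-1}=0$ and a nonzero constant $p_0$ and that \thmref{favard} therefore applies from $n=0$ onward.
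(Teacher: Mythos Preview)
Your proposal is correct and follows essentially the same rescaling argument as the paper: writing $\bar q_n$ as a scalar multiple of an orthonormal sequence, deriving the two-term recursion for the scaling constants from the symmetry condition in \thmref{favard}, and observing that positivity of the ratio guarantees a real solution. You in fact go slightly further than the paper by recording the closed form $\kappa_n^2\propto (n+2)(2n+3)(2n+5)$ and by explicitly addressing the boundary indices, both of which are correct and match the paper's recursion $\lambda_n^2=\frac{(n+1)(2n+1)}{(n+2)(2n+5)}\lambda_{n-1}^2$ under the identification $\kappa_n=\lambda_n^{-1}$.
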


\begin{proof}
 It is sufficient to check that there exist a family of orthonormal polynomials $f_n$ and constants $\lambda_n$ such that $ q_n=\lambda_n f_n$ for all $n$


We have the following recursion for $f_n$:
$$
4(n+2)cf_n=(2n+7)\lambda_{n+1}f_{n+1}+ (2n+1)\lambda_{n-1}f_{n-1}$$
or
$$
cf_n=\frac{(2n+7)\lambda_{n+1}}{4(n+2)\lambda_n}f_{n+1}+ \frac{(2n+1)\lambda_{n-1}}{4(n+2)\lambda_n}f_{n-1},$$
for $n\geq 1$.
\color{black}

%
Set
$$
A_{n+1}=\frac{(2n+7)\lambda_{n+1}}{4(n+2)\lambda_n},\quad
C_{n-1}=\frac{(2n+1)\lambda_{n-1}}{4(n+2)\lambda_n}.
$$
Then
$A_n=C_{n-1}$ if and only if
 $\displaystyle{\frac{(2n+5)\lambda_{n}}{4(n+1)\lambda_{n-1}}=\frac{(2n+1)\lambda_{n-1}}{4(n+2)\lambda_n}}$
 or
$$\lambda_n^2=\frac{(n+1)(2n+1)}{(n+2)(2n+5)}\lambda_{n-1}^2.$$

\color{black}
Taking $\lambda_0=1$ we can find a family of constants $\lambda_n$ satisfying this relation. Hence, by Theorem~\ref{favard}
the polynomials $f_n$ form an orthonormal family with respect to some measure, and therefore the polynomials are orthogonal.

\color{black}

\end{proof}

One can also show that $P_{-2,n}$ are non-classical orthogonal polynomials. We will give a simple proof of this fact in  Appendix. The same argument works in the case of $P_{-4,n}$.

\section{Appendix}


Given a sequence of orthogonal polynomials $\{q_n\}_{n\ge0}$ we consider the complex linear space
of all differential operators with complex coefficients on the real line that have the polynomials $q_n$ as their eigenfunctions. Thus
$$\mathcal{D}(w)=\{D:Dq_n=\gamma_n(D) q_n, \; \gamma_n(D)\in{\bf C}\;\;\text{for all}\; n\ge0\}.$$

Some properties of $\mathcal{D}(w)$, see \cite{MR2329130}:

(1) The definition of $\mathcal{D}(w)$ depends only on the weight function $w=w(x)$ and not on the orthogonal sequence $\{q_n\}_{n\ge0}$.

(2) If $D\in\mathcal{D}(w)$, then
$$D=\sum_{i=0}^s f_i(x)\left(\frac{d}{dx}\right)^i,$$
where $f_i(x)$ is a polynomial and $\deg f_i\le i$.

To ease the notation if $\nu\in{\bf C}$ let
$$[\nu]_i=\nu(\nu-1)\cdots(\nu-i+1),\qquad [\nu]_0=1.$$

(3) If $\{q_n\}_{n\ge0}$ is a sequence of orthogonal polynomials and $D\in\mathcal{D}(w)$ is of the form $D=\sum_{i=0}^sf_i(x)\left(\frac{d}{dx}\right)^i$, with
$f_i(x)=\sum_{j=0}^i f_j^i(D)x^j,$
then
$$\gamma_n(D)=\sum_{i=0}^s[n]_if_i^i(D).$$

Let us consider the Weyl algebra ${\bf D}=\{D=\sum_{i=0}^s f_i(x)\left(\frac{d}{dx}\right)^i:f_i\in{\bf C}[x]\}$,
and the subalgebra
$$\mathcal{D}=\{D=\sum_{i=0}^s f_i(x)\left(\frac{d}{dx}\right)^i\in{\bf D}: \deg(f_i)\le i\}.$$
(4) If $D\in\mathcal{D}$ satisfies the symmetry condition $(Dp,q)=(p,Dq)$ for all $p,q\in{\bf C}[x]$, then $D\in\mathcal{D}(w)$.

(5) For any $D\in\mathcal{D}(w)$ there is a unique $D^*\in\mathcal{D}(w)$ such that $(Dp,q)=(p,D^*q)$ for all $p,q\in{\bf C}[x]$. We refer to $D^*$ as the adjoint of $D$. The map $D\mapsto D^*$ is a *-operation in the algebra $\mathcal{D}(w)$, and the orders of $D$ and $D^*$ coincide.

(6) The set $\mathcal{S}(w)=\{D\in \mathcal{D}(w): D=D^*\}$ of all symmetric differential operators is a real form of $\mathcal{D}(w)$:
$$\mathcal{D}(w)=\mathcal{S}(w)\oplus i\mathcal{S}(w).$$

(7) $D\in\mathcal{D}(w)$ is symmetric if and only if $\gamma_n(D)$ is real for all $n\ge0$.

In the literature a weight $w$ is called classical if there exists a second order symmetric differential operator $D$ such that $Dq_n=\gamma_n q_n$ for all $n\ge0$.

\

\begin{thm}\label{nonclassical}
The sequences of orthogonal polynomials \eqnref{q-2} $\{\bar q_n\}_{n\ge0}$ and \eqnref{q-4} $\{ q_n\}_{n\ge0}$ are not classical.
\end{thm}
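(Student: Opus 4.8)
The plan is to show that the weight $w$ attached to each family admits no symmetric second-order operator in $\mathcal D(w)$; in fact I will prove the stronger assertion that $\mathcal D(w)$ contains no operator of order exactly two at all, which by the definition of a classical weight gives the claim at once. Suppose to the contrary that some $D\in\mathcal D(w)$ has order two. By property (2) we may write
\[
D=f_2(x)\frac{d^2}{dx^2}+f_1(x)\frac{d}{dx}+f_0(x),\qquad \deg f_2\le 2,\ \deg f_1\le 1,\ \deg f_0\le 0,
\]
so that $f_2(x)=a_2x^2+a_1x+a_0$, $f_1(x)=b_1x+b_0$, $f_0(x)=c_0$, and by property (3) the eigenvalue is $\gamma_n=n(n-1)a_2+nb_1+c_0$. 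The entire argument then reduces to showing that the linear system obtained from $Dq_n=\gamma_nq_n$ (respectively $D\bar q_n=\gamma_n\bar q_n$) for small $n$ has no solution with $f_2\not\equiv 0$.

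First I would use the parity of the polynomials: each $q_n$ and each $\bar q_n$ is even or odd according to the parity of $n$. Matching the lowest-degree coefficient in $Dq_1=\gamma_1q_1$ forces $b_0=0$, and matching the $x^1$-coefficient in $Dq_2=\gamma_2q_2$ forces $a_1=0$, so that $f_1$ is odd and $f_2$ is even. Next I would impose the eigenvalue equation for $n=2$ and $n=3$. For each such $n$ the top-power coefficient merely reproduces the formula of property (3) and so carries no information, while the remaining same-parity coefficients yield two genuine linear relations among $a_2,a_0,b_1$. Solving them shows that $D$ is determined up to the overall scale $a_2$ together with the additive constant $c_0$, which only shifts every $\gamma_n$ by a common amount; explicitly, in the $P_{-4}$ case one finds $a_0=-\tfrac54a_2$ and $b_1=7a_2$, with an analogous pair of relations in the $P_{-2}$ case. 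In particular the conditions coming from $n\le 3$ are consistent, so no contradiction is yet available.

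The decisive step is $n=4$. Here $q_4$ (respectively $\bar q_4$), computed once from the recursion \eqnref{qrecursion} (respectively \eqnref{qbarrecursion}), is a genuine degree-four polynomial $Ax^4+Bx^2+C$, and imposing $Dq_4=\gamma_4q_4$ yields, after the $x^4$-coefficient again drops out automatically, the relation $6a_0A=(5a_2+b_1)B$ from the $x^2$-coefficient. Substituting the values $a_0=-\tfrac54a_2$, $b_1=7a_2$ already forced by the cases $n\le 3$ turns this into the numerical constraint $A/B=-8/5$, whereas the actual ratio read off from $q_4=\frac{2048x^4-1248x^2+75}{1155}$ is $A/B=-64/39$; the same incompatibility occurs for $\bar q_4=\frac{10240x^4-7104x^2+539}{15015}$. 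Since $f_2\equiv 0$ is excluded, as it would lower the order of $D$ below two, this contradiction shows that $\mathcal D(w)$ has no second-order element, hence no symmetric one, and so neither family is classical.

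I expect the only real obstacle to be the bookkeeping: one must carry the expansions of $Dq_n$ accurately through $n=4$ and confirm that the cases $n\le 3$ collapse to a single scaling freedom, so that the incompatibility is genuinely postponed to and then realized at $n=4$. Everything else is a finite, if tedious, linear-algebra computation controlled entirely by properties (2) and (3), and it is the parity observation that keeps the number of unknowns small enough to render the final numerical inconsistency transparent.
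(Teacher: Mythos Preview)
Your strategy is identical to the paper's: rule out any order-two $D\in\mathcal D(w)$ by writing $D$ in the general form allowed by property~(2), imposing $Dq_n=\gamma_n q_n$ for small $n$, and deriving a contradiction. The difference lies in where the contradiction appears, and here your bookkeeping is actually the more careful one. The paper's displayed eigenvalue $\gamma_n=an(n-1)+bn+c+en+f$ is not what property~(3) gives; only the leading coefficients $f_i^i$ enter, so the correct value is $\gamma_n=an(n-1)+en$ (in your notation, $n(n-1)a_2+nb_1+c_0$). With the paper's formula, $n=0$ spuriously forces $c+f=0$, then $n=1$ yields $c=0$, and after that the system does close at $n=3$. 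With the correct formula---the one you use---$n=0$ is vacuous, $n=1,2$ kill only $b_0$ and $a_1$, and the constant-term equations at $n=2,3$ leave a genuine one-parameter family ($a_0=-\tfrac54 a_2$, $b_1=7a_2$ for the $q_n$, and the analogous relations for the $\bar q_n$), so the contradiction is indeed postponed to $n=4$. Your $x^2$-coefficient identity $6a_0A=(5a_2+b_1)B$ at $n=4$ and the resulting numerical mismatch $-8/5\neq -64/39$ for $q_4=(2048x^4-1248x^2+75)/1155$ are correct, and the same mechanism works for $\bar q_4$. In short: same method, but you have located the obstruction at the right degree and in doing so implicitly repaired a slip in the paper's argument.
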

\begin{proof}  In the polynomials above we replace $c$ by $x$ as it is more natural to use this later letter as our variable.

 From (6) above, to prove the theorem is equivalent to prove that in $\mathcal{D}(w)$ there is no differential operator of order 2. Suppose $D\in\mathcal{D}(w)$ is of order 2. Then we can assume that $D$ is of the form
\begin{equation}\label{2ndorderode}
D=(ax^2+bx+c)\left(\frac{d}{dx}\right)^2+(ex+f)\frac{d}{dx},
\end{equation}
for some $a,b,c,e,f\in{\bf C}$.
For \eqnref{q-2} the first five terms of the orthogonal sequence $\{\bar q_n\}_{n\ge0}$ are:
$$\bar q_0=\frac{1}{5},\quad \bar q_1=\frac{8x}{35},\quad \bar q_2=\frac{32x^2-7}{105},\quad \bar q_3=\frac{8x(64x^2-29)}{1155},$$
$$\bar q_4=\frac{(160\times64) x^4-(32\times222)x^2+(77\times7)}{13\times1155}.$$
From the definition of $\mathcal{D}(w)$ and (3) we know that
\begin{equation}
\label{2ndorderclassical}
(ax^2+bx+c)\left(\frac{d}{dx}\right)^2\bar q_n+(ex+f)\frac{d}{dx}\bar q_n=(an(n-1)+bn+c+en+f)\bar q_n,
\end{equation}
for all $n\ge0$.

If we set $n=0$ we get $c+f=0$.  If we put $n=1$ in the above equation we get
$$
(ex+f)\frac{8}{35}=(b+c+e+f)\frac{8x}{35}
$$
which implies $f=0$ and so $c=0$ (as $c+f=0$) and $e=b+e$. Hence $b=0$.
Then for $n=2$ we obtain
\begin{align*}
(2a+2e)\frac{32x^2-7}{105}&=ax^2\left(\frac{d}{dx}\right)^2\left(\frac{32x^2-7}{105}\right)+ex\frac{d}{dx}\left(\frac{32x^2-7}{105}\right) \\
&=ax^2 \frac{64}{105}+ex\frac{64x}{105}
\end{align*}
and hence $ a+e=0$. Using all this information, the above equation \eqnref{2ndorderclassical} for $n=3$ is equivalent to
\begin{align*}
(6a+3e)  \frac{8x(64x^2-29)}{1155}&=ax^2\left(\frac{d}{dx}\right)^2\left(\frac{8x(64x^2-29)}{1155}\right)+ex\frac{d}{dx}\left( \frac{8x(64x^2-29)}{1155}\right)  \\
&= \frac{8\cdot 64\cdot 6\cdot ax^3}{1155}+ \frac{8\cdot 64\cdot 3\cdot ex^3}{1155}-\frac{8\cdot 29ex}{1155}
\end{align*}
which reduces to
$$
2a+e=0.
$$
Therefore  $a=e=0$ which implies that $\mathcal{D}=0$. In other words we have proved that the only differential operators in $\mathcal{D}(w)$ of degree less or equal to two are the constants.

For \eqnref{q-4} \begin{gather}
 q_0 =1,  \quad q_1 =4x/5,\quad
 q_2=\frac{32x^2-5}{35},\quad
 q_3=\frac{16}{105} x \left(8 x^2-3\right).\notag
\end{gather}
 A similar analysis yields also $\mathcal{D}=0$.
%
%
%
\end{proof}


\section{Acknowledgment}
   Second author was
supported in part by the CNPq grant (301743/2007-0), by the
Fapesp grant (2010/50347-9) and by the MathAmSud grant (611/2012).

%
%
\def\cprime{$'$} \def\cprime{$'$}

\end{document}